\documentclass[conference]{ieeeconf}
\IEEEoverridecommandlockouts
\usepackage[noadjust]{cite}
\usepackage{amsmath,amssymb,amsfonts}
\usepackage{algorithmic}
\usepackage{graphicx}
\usepackage{textcomp}
\usepackage{xcolor}
\usepackage{dsfont}
\usepackage{amsthm}
\usepackage{xcolor}
\usepackage{subcaption}
\usepackage{dsfont}
\usepackage{textcomp}
\usepackage[hidelinks]{hyperref}

\let \labelindent \relax
\usepackage{enumitem}
\usepackage{accents}

\DeclareMathOperator{\sign}{sign}
\DeclareMathOperator{\spant}{span}
\DeclareMathOperator{\eig}{eig}

\newtheorem{theorem}{Theorem}[section]
\newtheorem{corollary}{Corollary}[theorem]
\newtheorem{lemma}[theorem]{Lemma}

\newtheorem{proposition}[theorem]{Proposition}

\newcommand{\bs}[1]{\boldsymbol{#1}}

\newcommand{\Na}{{N_{\rm a}}}
\newcommand{\No}{{N_{\rm o}}}

\newcommand{\Zz}{\bs{Z}}

\newcommand{\Aa}{A_{\rm a}}

\newcommand{\range}[1]{\operatorname{range} #1}

\def\BibTeX{{\rm B\kern-.05em{\sc i\kern-.025em b}\kern-.08em
    T\kern-.1667em\lower.7ex\hbox{E}\kern-.125emX}}
\begin{document}

\title{Constraint-Induced Redistribution of Social Influence in Nonlinear Opinion Dynamics
\thanks{}
}

\author{Vishnudatta~Thota, Anastasia~Bizyaeva% <-this % stops a space
\thanks{V. Thota and A. Bizyaeva are with the Sibley School of Mechanical and Aerospace Engineering at Cornell University, Ithaca, NY, 14850; \tt{\{vt279, anastasiab\}@cornell.edu} 
}
}

\maketitle

\begin{abstract}
We study how intrinsic hard constraints on the decision dynamics of social agents shape collective decisions on multiple alternatives in a heterogeneous group. 
Such constraints may arise due to structural and behavioral limitations, such as adherence to belief systems in social networks or hardware limitations in autonomous networks.
In this work, agent constraints are encoded as projections in a multi-alternative nonlinear opinion dynamics framework. We prove that projections induce an invariant subspace on which the constraints are always satisfied and study the dynamics of networked opinions on this subspace. We then show that heterogeneous pairwise alignments between individuals' constraint vectors generate an effective weighted social graph on the invariant subspace, even when agents exchange opinions over an unweighted communication graph in practice. With analysis and simulation studies, we illustrate how the effective constraint-induced weighted graph reshapes the centrality of agents in the decision process and the group's sensitivity to distributed inputs.

\end{abstract}

%\begin{IEEEkeywords}
%\end{IEEEkeywords}

\section{Introduction}

Collective decision-making is an important component of multi-agent autonomy across application domains, from navigation in multi-vehicle networks \cite{ren2007distributed, rios2016automated}
to task allocation in multi-robot teams \cite{gerkey2004formal, khamis2015multi}.
Analogous collective decision processes arise in natural social systems, where individuals form and update opinions about different options through structured social interactions \cite{degroot1974reaching,seeley1999group,jia2015opinion}.
In each of these settings, the collective decision outcome can depend not only on how the agents communicate, but also on how their individual constraints shape the information they exchange and act upon. 
In this paper, we study the interplay between heterogeneous agent constraints and outcomes of collective decisions. Principled mathematical understanding of this interplay is important both for analyzing natural social systems and for designing autonomous behaviors.

Mathematical models of networked opinion dynamics, e.g., \cite{degroot1974reaching,altafini2012consensus, jia2015opinion, fontan2018achieving,he2023opinion}, are used to study social interactions in multi-agent systems and to design algorithms for multi-agent autonomy. Among these, the Nonlinear Opinion Dynamics (NOD) model \cite{bizyaeva2023tac,bizyaeva2025tac,leonard2024fast} is a tractable framework for tunably sensitive collective decision-making on multiple alternatives. In NOD, collective decisions emerge through a bifurcation that yields multistable agreement and disagreement outcomes even for homogeneous, i.e. identical, agents, and has primarily been studied under such homogeneity assumptions.
The NOD framework is also emerging as a tool for the design of flexible cooperative autonomy across domains \cite{cathcart2023proactive,paine2024model,martinez2025avocado,qi2025integrating}. In this paper, we build on NOD to characterize the effects of agent constraints and group heterogeneity on the outcome of multi-alternative collective decisions.

Networked opinion dynamics are often constrained due to structural or behavioral limitations of the agents, such as hardware or resource limitation in the autonomous robots or social conformity to heterogeneous belief systems in social networks. Existing work has studied the effects of heterogeneous interaction weights, confidence bounds, and inter-option coupling  on opinion formation in varioud frameworks, from linear consensus dynamics to bounded confidence models \cite{mirtabatabaei2012opinion,liang2013opinion,li2023bounded,he2023opinion,zhang2026modeling}. A related line of work considers constrained opinion dynamics, including  limiting the agents' linear opinion updates to constraint sets \cite{nedic2010constrained}, resource allocation formulations with projected linear opinion dynamics \cite{wankhede2025multi}, and opinion dynamics defined on curved manifolds such as the unit sphere \cite{zhang2021dissensus,zhang2022opinion,zhang2025mixed}. Such constraints have not been considered in the NOD framework beyond the simplex constraint in \cite{bizyaeva2023tac}.

In this paper, our contributions are as follows. First, we introduce hard constraints on individual agents' opinions  into the NOD framework for the first time. Analogously to the work on projected linear consensus flows \cite{nedic2010constrained,wankhede2025multi}, these constraints are enforced via local projections in the agents' opinion update dynamics. We prove that local projection constraints induce a global invariant subspace on which individual agents' constraints are always enforced, and derive a reduced representation of the dynamics of networked opinions on this subspace. 
Second, we analyze the reduced model with one-dimensional constraints and characterize the emergence of constrained network decision states via a supercritical pitchfork bifurcation and its unfolding, analogously to the unconstrained NOD. Third, we prove that heterogeneous pairwise alignments between agents' constraint vectors generate an effective weighted social graph, even when agents' true communication graph is unweighted. We illustrate how this constraint-induced weighted graph redistributes social influence and relative sensitivity to distributed inputs, across the network.

The paper is organized as follows. Section \ref{sec:Notation and Definitions} contains preliminaries from matrix theory and graph theory. In Section \ref{sec:Projection-Constrained NOD} we introduce and analyze projection-constrained NOD and characterize the emergent influence redistribution in the presence of heterogeneous constraints for several graph types. In Section \ref{sec:Simulations} we present illustrative numerical simulations. Finally, in Section \ref{sec:Conclusion} we conclude.

\section{Notation and Preliminaries}
\label{sec:Notation and Definitions}
Let $\mathbf{1}_{n}$ and $\mathbf{0}_{n}$ denote the $n-$dimensional column vector of ones and zeros, respectively. The matrix $I$ denotes the identity matrix of appropriate dimensions. Let $A \in \mathbb{R}^{n\times m}$. We use $\ker(A)$ and $\range(A)$ to represent the kernel of $A$ and range of $A$ respectively. $A$ is said to be irreducible if it is not similar to a block upper triangular matrix via a permutation matrix. We define $\sign$ as the sign function which return 1 (-1) for positive (negative) numbers and 0 for zero. 
A vector $\mathbf{v} \in \mathbb{R}^{n}$ is called strictly positive if all its entries are positive and it is represented by $\mathbf{v} > 0$. 
The matrix $P = A (A^T A)^{-1} A^T \in \mathbb{R}^{n \times n}$ is a projection matrix that defines an orthogonal projection onto the range of $A$. The matrix $P$ satisfies $P^2 = P$, and its complementary projection is $P^{\perp} = I - P$.

A weighted, signed graph $\mathcal{G} = (\mathcal{V},\mathcal{E},A)$ consists of a node set $\mathcal{V} = \{1, \dots, n\}$, an edge set $\mathcal{E}\subseteq \mathcal{V} \times \mathcal{V}$, and a weighted adjacency matrix $A \in \mathbb{R}^{n \times n}$ with entries $a_{ij} \neq 0$ if $(i,j) \in \mathcal{E}$ and $a_{ij} = 0$ otherwise. We consider simple graphs, i.e. ones with no self-loops, $a_{ii} = 0$ for all $i \in \mathcal{V}$. A graph is undirected if $(i,j) \in \mathcal{E}$ if and only if $(j,i) \in \mathcal{E}$, and $A = A^T$. A graph is connected if there exists a path between every pair of distinct nodes. Equivalently $A$ is irreducible. A graph is unweighted if $a_{ij} \in \{0,1\}$ for all $i,j \in\mathcal{V}$. We say the interaction between nodes $i,j$ is cooperative when $a_{ij} > 0$ and antagonistic when $a_{ij} < 0$.

A graph is unsigned if $a_{ij} \geq 0$ for all $(i,j) \in \mathcal{E}$, and signed if some edges have negative weight. A signed undirected graph $\mathcal{G}$ is structurally balanced if $\mathcal{V}$ admits a bipartition $\mathcal{V}_1 \cup \mathcal{V}_2$, with $\mathcal{V}_1 \cap \mathcal{V}_2 = \emptyset$, such that $a_{ij} > 0$ for all $(i,j) \in \mathcal{E}$ with $i,j \in \mathcal{V}_k$, $k \in \{1,2\}$, and $a_{ij} < 0$ for all $(i,j) \in \mathcal{E}$ with $i \in \mathcal{V}_1$, $j \in \mathcal{V}_2$. 

For an undirected, unweighted d-regular network, each node $i \in \mathcal{V}$ has degree $d = \sum_{j=1}^{n}a_{ij}$. In this case, $A\mathbf{1}_{n} = d\mathbf{1}_{n}$, and hence the dominant eigenvalue of the $A$ is $\lambda_{\max}= d$ and the corresponding eigenvector is $\mathbf{v} = \mathbf{1}_{n}$. For an undirected, unweighted star network, let node 1 denote the central node. The dominant eigenvalue of $A$ is $\lambda_{\max}= \sqrt{n-1}$, and the corresponding eigenvector is $\mathbf{v}=\left(\sqrt{n-1}, \mathbf{1}_{n-1}^{T} \right)^{T}$. More generally, if a network is unsigned and connected, then it follows from the Perron-Frobenius Theorem \cite[Theorem 2.12]{bullo2018lectures} that the dominant eigenvalue $\lambda_{\max}$ of $A$ is simple, and the corresponding eigenvector $\mathbf{v}$ is strictly positive and unique up to scaling. This eigenvector $\mathbf{v}$ defines the eigenvector centrality of the network, with each component $v_{i}$ quantifying the relative influence of node $i$.

The pitchfork bifurcation universal unfolding \cite[Chapter III]{schaeffer1985singularities} is described by the zero sets of 
$f(x) = q_{1}x \pm x^{3} + q_{2} + q_{3}x^{2}$
where $x \in \mathbb{R}$ is the state, $q_{1}$ is the bifurcation parameter, and $q_{2},~q_{3}$ are unfolding parameters, with $f(x) = 0$ typically describing sets of equilibria of a dynamical system. 
When $q_{2}=q_{3}=0$, the curves $f(x) = 0$ become the symmetric pitchfork bifurcation, with two symmetric equilibria branching off from the $x = 0$ equilibrium as the bifurcation parameter is varied. If one of the unfolding parameters is non-zero, the bifurcation diagram breaks up near its bifurcation point, with parameters $q_2,q_3$ selecting one of four possible topologically distinct curves of equilibria. 

Let $\mathbf{r}_{s} \in \mathbb{R}^{\Na}$. The $k^{\text{th}}$ order directional derivative of $\Phi$ at $(\mathbf{y}^{*}, u^*)$ is denoted as
\begin{equation}
\label{eqn:higher_order_derivative_notation}
\begin{aligned}
\left(d^{k} \Phi\right)_{\mathbf{y}^{*}, u^*} \left(\mathbf{r}_{1},\hdots,\mathbf{r}_{k}\right) \hspace{35mm} \\
= \frac{\partial}{\partial t_1} \hdots \frac{\partial}{\partial t_k} \Phi \left(\mathbf{y}^{*}+\sum_{s=1}^{k}t_{s}\mathbf{r}_{s}, u^{*}\right)
\end{aligned}
\end{equation}

\begin{lemma}[\cite{fox1968rates}]
\label{lemma:eigen_vecotr_value_rate}
Consider the eigenvalue problem, $K\mathbf{v}_{i} = \lambda_{i} \mathbf{v}_{i}$, where $\mathbf{v}_{i}$ is the eigenvector corresponding to the $i^{\text{th}}$ eigenvalue $\lambda_{i}$ and $K \in \mathbb{R}^{n \times n}$. Let $K$ be a symmetric matrix, and let $K\left(\delta\right) \in \mathbb{R}^{n \times n}$ be the symmetric matrix obtained after perturbing some of the entries of $K$ by $\delta$. The first-order approximations of the eigenpair under the perturbation ($\delta$) is:
\begin{subequations} \label{eq:eigenpair_approximation}
\begin{align}
    \lambda_{i}^{*} & \approx \lambda_{i} + \delta \frac{\partial \lambda_{i}}{\partial \delta} \label{eq:eigenvalue_approximation} \\
    \mathbf{v}_{i}^{*}& \approx \mathbf{v}_{i} + \delta \frac{\partial \mathbf{v}_{i}}{\partial \delta} \label{eq:eigenvector_approximation}
\end{align}
\end{subequations}
where $\frac{\partial \lambda_{i}}{\partial \delta} = \mathbf{v}_{i}^{T}K'\left(\delta\right)\mathbf{v}_{i}$,  $\frac{\partial \mathbf{v}_{i}}{\partial \delta} = - \left[F_{i}\left(0\right)F_{i}\left(0\right) +  2\mathbf{v}_{i}\mathbf{v}_{i}^{T} \right]^{-1} F_{i}\left(0\right)F_{i}'\left(\delta\right)\mathbf{v}_{i}$ and $F_{i}\left(\delta\right) = K\left(\delta\right) - \lambda_{i}I$ . Here $K'\left(\delta\right)$ and $F_{i}^{'}\left(\delta\right)$ denote the matrices formed by differentiating the elements of $K\left(\delta\right)$ and $F_{i}\left(\delta\right)$ matrices, respectively, with respect to $\delta$. 
\end{lemma}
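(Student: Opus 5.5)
The plan is to differentiate the eigenvalue equation $K(\delta)\mathbf{v}_i(\delta)=\lambda_i(\delta)\mathbf{v}_i(\delta)$ in $\delta$ at $\delta=0$, together with the normalization $\mathbf{v}_i(\delta)^T\mathbf{v}_i(\delta)=1$. The first-order expansions \eqref{eq:eigenvalue_approximation}--\eqref{eq:eigenvector_approximation} are then just Taylor's theorem. We assume $\lambda_i$ is a simple eigenvalue, so that $\lambda_i(\delta)$ and $\mathbf{v}_i(\delta)$ are differentiable near $\delta=0$, e.g.\ by the implicit function theorem applied to $(\mathbf{v},\lambda)\mapsto\big((K(\delta)-\lambda I)\mathbf{v},\ \mathbf{v}^T\mathbf{v}-1\big)$. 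The only real content is computing the two derivatives.

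\emph{Eigenvalue derivative.} Write the eigen-equation as $F_i(\delta)\mathbf{v}_i(\delta)=0$ with $F_i(\delta)=K(\delta)-\lambda_i(\delta)I$. Differentiating gives
$$K'\mathbf{v}_i+K\mathbf{v}_i'=\lambda_i'\mathbf{v}_i+\lambda_i\mathbf{v}_i'.$$
Left-multiply by $\mathbf{v}_i^T$. Symmetry of $K$ gives $\mathbf{v}_i^TK=\lambda_i\mathbf{v}_i^T$, so the $\mathbf{v}_i'$ terms cancel. Normalization then yields $\lambda_i'=\mathbf{v}_i^TK'\mathbf{v}_i$.

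\emph{Eigenvector derivative (Fox--Kapoor form).} Differentiating $F_i\mathbf{v}_i=0$ gives $F_i(0)\mathbf{v}_i'=-F_i'\mathbf{v}_i$, where $F_i'=K'-\lambda_i'I$. The matrix $F_i(0)$ is singular with kernel $\spant\{\mathbf{v}_i\}$, so this system alone fixes $\mathbf{v}_i'$ only up to a multiple of $\mathbf{v}_i$. Differentiating the normalization supplies the missing equation $\mathbf{v}_i^T\mathbf{v}_i'=0$. The trick is to combine the two into one nonsingular system:
\begin{enumerate}[label=(\roman*)]
\item multiply the first equation by $F_i(0)$, which is symmetric, to get $F_i(0)^2\mathbf{v}_i'=-F_i(0)F_i'\mathbf{v}_i$;
\item add $2\mathbf{v}_i\mathbf{v}_i^T\mathbf{v}_i'=0$ to the left-hand side.
\end{enumerate}
This gives
$$\big[F_i(0)F_i(0)+2\mathbf{v}_i\mathbf{v}_i^T\big]\mathbf{v}_i'=-F_i(0)F_i'\mathbf{v}_i.$$

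\emph{Main obstacle: invertibility.} The step needing care is showing that $M=F_i(0)^2+2\mathbf{v}_i\mathbf{v}_i^T$ is invertible. Since $K$ is symmetric, use an orthonormal eigenbasis $\{\mathbf{v}_j\}$. In this basis $M$ is diagonal:
\begin{itemize}
\item on $\mathbf{v}_j$ with $j\neq i$, the eigenvalue is $(\lambda_j-\lambda_i)^2$, which is nonzero by simplicity of $\lambda_i$;
\item on $\mathbf{v}_i$, the eigenvalue is $2$.
\end{itemize}
Hence $M$ is positive definite and can be inverted, giving the stated expression for $\partial\mathbf{v}_i/\partial\delta$.

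The approximations then follow by truncating the Taylor expansions at first order in $\delta$. The simplicity hypothesis is implicit in the statement and should be made explicit; it holds in the intended application, the Perron eigenvalue of a connected unsigned graph.
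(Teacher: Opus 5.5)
Your argument is correct and is essentially the original Fox--Kapoor derivation: differentiate $F_i\mathbf{v}_i=0$ and $\mathbf{v}_i^T\mathbf{v}_i=1$, premultiply by $F_i(0)$, add $2\mathbf{v}_i\mathbf{v}_i^T\mathbf{v}_i'=0$, and invert the positive definite matrix; the paper does not reprove this but only cites that source. You are right to make the simplicity of $\lambda_i$ and the unit normalization of $\mathbf{v}_i$ explicit, since the statement leaves both tacit, and your reading $F_i'=K'-\lambda_i'I$ versus the paper's literal $F_i'=K'$ makes no difference because $F_i(0)\mathbf{v}_i=0$ annihilates the $\lambda_i'$ term.
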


\section{Projection-Constrained NOD}
\label{sec:Projection-Constrained NOD}
We consider $\Na$ agents forming and exchanging opinions on $\No$ options over an undirected and connected communication network with a static topology. 
Agent interactions are encoded in graph $\mathcal{G} = \left(\mathcal{V}, \mathcal{E}, \Aa\right)$, where $\mathcal{V}$ is the node set, $\mathcal{E}$ is the edge set and $\Aa \in \mathbb{R}^{\Na \times \Na}$ is the inter-agent adjacency matrix. Here, $\Aa$ is symmetric, irreducible, with entries $a_{ij} \in \{0,1\}$ and $a_{ii} = 0$ for all $i \in \mathcal{V}$. 

Let $z_{ij} \in \mathbb{R}$ be the opinion of agent $i$ on option $j$, with $z_{ij} > 0 (< 0)$ representing a preference (rejection) of option $j$ and $|z_{ij}|$ reflecting strength of commitment to this decision. We say $z_{ij} = 0$ means agent $i$ is neutral on option $j$.  
Each agent $i$ has a set of constraints on its opinions, for example arising from a learned belief system if it is an agent in a social network or a set of hardware constraints if opinions represent allocation of onboard resources to tasks. We assume that each set of opinion constraints of agent $i$ can be encoded in an orthogonal projection matrix $P_{i}\in \mathbb{R}^{\No \times \No}$ whose complementary projection is $P_{i}^{\perp} = I - P_{i}$. 
The opinions of agent $i$ are represented by a vector $\Zz_{i} = \left(z_{i1},\cdots,z_{i\No}\right)^{T} \in V_{P_i} \subseteq \mathbb{R}^{\No}$, where $V_{P_i} = \{ \mathbf{x} \in \mathbb{R}^{\No}  \ s.t. \ P_i^{\perp} \mathbf{x} = 0 \}$. The network state $\Zz = \left(\Zz_{1}^{T}, \cdots, \Zz_{\Na}^{T}\right)^{T} \in V_{P_1} \times V_{P_2} \times \dots \times V_{P_{\Na}} \subseteq \mathbb{R}^{\Na \No}$ represents the opinions of all the agents.

Each agent $i$ updates its opinion according to the nonlinear update rule,
\begin{subequations} \label{eq:CNOD-simple}
\begin{align}
    \dot{\Zz}_{i} &= P_i \mathbf{F}_{i}(\Zz), \label{eq:CNOD_simple_proj} \\
    F_{ij}(\Zz) &= -d z_{ij} + S\left( u \left(\alpha z_{ij}  + \gamma \, \sum_{\substack{k=1 \\ k \neq i}}^\Na(A_{\rm a})_{ik} z_{kj} \right) \right)+ b_{ij} \label{eq:CNOD_simple_social} %\nonumber
\end{align}
\end{subequations}
where $\mathbf{F}_i(\Zz) = (F_{i1}(\Zz), \dots, F_{i \No}(\Zz) )$ and $S:\mathbb{R} \to \mathbb{R}$ an odd sigmoidal saturating function which satisfies $S(0) = 0$, $S'(0) = 1$ and $\sign (S''(z)) = -\sign(z)$. 
Model parameters include the attention parameter ($u >0$), damping coefficient ($d>0$), social influence weight ($\gamma >0$), and strength of self-reinforcement of opinion ($\alpha >0$). The parameter $b_{ij} \in \mathbb{R}$ represents an external input or intrinsic bias of agent $i$ on option $j$.

The saturating influence of neighbors on each agent's opinion update in \eqref{eq:CNOD-simple} follows the general form of multidimensional Nonlinear Opinion Dynamics models introduced in \cite{bizyaeva2025tac}. However, in this previous work, cross-coupling between opinions was a soft constraint that indirectly influenced the opinion evolution through imposing additional graph structure into the opinion evolution equations. Distinctly from this paradigm, here we consider opinion coupling through \textit{hard constraints} encoded by the projection matrices $P_{i}$, enforced in the opinion update of each agent. 
In the following Proposition we prove that the hard constraint $P^{\perp}_{i} \Zz_i(t) = 0$ is enforced along trajectories of \eqref{eq:CNOD-simple} along its invariant subspace $V_{P_1}\times V_{P_2} \times \dots V_{P_{\Na}}$.
\begin{proposition}[Constraint Enforcement]\label{prop:constraint_enforcement}
    Consider \eqref{eq:CNOD-simple} and suppose $P_{i}$ has rank $k \geq 1$.  Then $\range{P_{i}} = \ker{P^{\perp}_{i}}$ is invariant under the flow of \eqref{eq:CNOD-simple}.
\end{proposition}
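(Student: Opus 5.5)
The plan is to show that the constraint residual $\mathbf{w}_i(t) := P_i^{\perp}\Zz_i(t)$ stays constant along every trajectory of \eqref{eq:CNOD-simple}. If it starts at zero it then remains zero, which is exactly invariance of $\ker P_i^{\perp}$. Before that, I will record the identity $\range{P_i} = \ker{P_i^{\perp}}$. This follows from $P_i^2 = P_i$. If $\mathbf{x} = P_i\mathbf{y}$, then $P_i^{\perp}\mathbf{x} = P_i\mathbf{y} - P_i^2\mathbf{y} = 0$. Conversely, if $P_i^{\perp}\mathbf{x} = 0$, then $\mathbf{x} = P_i\mathbf{x} \in \range{P_i}$. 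The rank assumption $k\ge 1$ only ensures the subspace is nontrivial; it plays no role in the argument.

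The key computation is differentiating $\mathbf{w}_i$ along a solution:
\begin{equation*}
\dot{\mathbf{w}}_i = P_i^{\perp}\dot{\Zz}_i = (I-P_i)P_i\mathbf{F}_i(\Zz) = (P_i - P_i^2)\mathbf{F}_i(\Zz) = \mathbf{0}_{\No}.
\end{equation*}
This holds for every $\Zz$, whether or not the other agents satisfy their own constraints. Hence $P_i^{\perp}\Zz_i(t) = P_i^{\perp}\Zz_i(0)$ for all $t$ in the interval of existence. If $\Zz_i(0)\in\ker P_i^{\perp}$, then $\Zz_i(t)\in\ker P_i^{\perp}$ for all such $t$. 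Since this holds for each $i$ separately, the product $V_{P_1}\times\cdots\times V_{P_{\Na}}$ is also invariant.

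The only step needing care is well-posedness: the derivative computation above must apply to the unique solution, and that solution must exist for all time. The vector field $\Zz\mapsto (P_i\mathbf{F}_i(\Zz))_i$ is smooth, since $S$ is smooth. It is also globally Lipschitz, because $S'$ is bounded: $S$ is sigmoidal, and the sign condition on $S''$ makes $S'$ maximal at $0$, so $|S'|\le 1$. Existence and uniqueness for all $t\in\mathbb{R}$ then follow from standard ODE theory, and the argument above applies on the whole time axis. I expect no genuine obstacle beyond stating this regularity explicitly.
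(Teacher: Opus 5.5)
Your proposal is correct and follows the paper's proof. Both differentiate $P_i^{\perp}\Zz_i$ along trajectories and use $P_i^{\perp}P_i = P_i - P_i^2 = 0$ to conclude that the residual stays zero; your identity $\range{P_i} = \ker{P_i^{\perp}}$ and the global well-posedness remark (via $0 \le S' \le 1$) are details the paper leaves implicit.
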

\begin{proof}
    Suppose $\Zz_i(0) \in \ker{P^{\perp}_{i}}$. Then $P^{\perp}_{i} \Zz_i(0) = 0$. The time derivative $\frac{d}{dt} (P^{\perp}_{i} \Zz_i) = P^{\perp}_{i} \dot{\Zz}_i = P^{\perp}_{i} P_{i} \mathbf{F}_i(\Zz) = 0$. Therefore $\Zz_i(t) \in \ker{P^{\perp}_{i}}$ for all $t \geq 0$.
\end{proof}
Crucially, since each agent's projection constraint is encoded locally in its own update rule \eqref{eq:CNOD-simple}, along the invariant subspace estbalished by Proposition \ref{prop:constraint_enforcement} the constraints are enforced without requiring agents to communicate their constraint information explicitly to their neighbors. This is consistent with practical settings where constraints may reflect private, agent-specific properties such as hardware limitations, that may not be observable or secure to share over a communication network.

\subsection{Bifurcation analysis under one-dimensional constraints}
We now specialize to the case of rank-one projection constraints, in which each agent $i$'s constraint subspace is one-dimensional, spanned by a single vector $\mathbf{p}_i \in \mathbb{R}^{\No}$.
We define the matrix $P_i$ to be a projection matrix onto the span of $\mathbf{p}_i$ in $\mathbb{R}^\No$, 
\begin{equation}
    P_i = \frac{1}{\|\mathbf{p}_i\|^{2}}\mathbf{p}_i \mathbf{p}_{i}^T = \hat{\mathbf{p}}_i \hat{\mathbf{p}_{i}}^T. \label{eq:proj_matrix_onto_span_of_p}
\end{equation}
where $\hat{\mathbf{p}}_i = \frac{\mathbf{p}_{i}}{\|\mathbf{p}_{i}\|} $ and the norm $\|\mathbf{p}_{i}\|$ is the standard Euclidean L2 norm.
We will refer to $\mathbf{p}_{i}$ as the \textit{constraint vector} and the span of $\mathbf{p}_{i}$ as the \textit{constraint subspace} of the $i^{\text{th}}$ agent.
By Proposition \ref{prop:constraint_enforcement}, the flow of \eqref{eq:CNOD-simple} is invariant on $V_{P_i} = \operatorname{span}\{\mathbf{p}_i\}$ for each agent $i$. Therefore, on this invariant subspace, the opinion vector of agent $i$ satisfies $\Zz_i = y_i \hat{\mathbf{p}}_i$, where $y_i = \hat{\mathbf{p}}_i^T \Zz_i \in \mathbb{R}$ is the \textit{effective opinion} of agent $i$. In the following Proposition we illustrate that restricted to the constraint subspace, the $\Na\No$-dimensional dynamics of \eqref{eq:CNOD-simple} reduce to an $\Na$-dimensional system in the effective opinions.

For every agent $i$, let $y_i \in \mathbb{R}$ and $b_{ei} \in \mathbb{R}$ denote the projections of $\Zz_{i}$ and $\mathbf{b}_{i} =\left(b_{i1},\hdots,b_{i\No}\right)^{T}$, respectively, onto the constraint vector $\mathbf{p}_{i}$. 

\begin{proposition}[Reduced Dynamics]\label{prop:reduced_dyn}
Consider \eqref{eq:CNOD-simple} restricted to the constraint subspace $V_{P_1}\times V_{P_2} \times \dots V_{P_{\Na}}$ induced by rank-one projection constraints \eqref{eq:proj_matrix_onto_span_of_p} for each agent $i \in \mathcal{V}$. The flow is exactly characterized by dynamics of the agent's effective opinion vector $\mathbf{y} =\left(y_{1},\hdots,y_{\No}\right)^{T} \in \mathbb{R}^{\Na}$ with effective bias $\mathbf{b}_{e} =\left(b_{e1},\hdots,b_{e\Na}\right)^{T}\in \mathbb{R}^{\Na}$, which evolves as $\dot{\mathbf{y}} = \Phi \left(\mathbf{y},u \right)$ where

\begin{align}
\label{eq:CNOD-multi-agnet-y-domain}
\dot{y}_{i} & = \Phi_{i}\left(\mathbf{y}, u\right) \\
~=&-dy_{i} + \hat{\mathbf{p}}_i^{T} S\left(u\left(\alpha y_{i}\hat{\mathbf{p}}_i+ \gamma \, \sum_{\substack{k=1 \\ k \neq i}}^\Na(A_{\rm a})_{ik} y_{k}\hat{\mathbf{p}_k}\right)\right) + b_{ei},   \nonumber
\end{align}
$y_{i} = \hat{\mathbf{p}}_i^T \Zz_{i}$, and $b_{ei} = \hat{\mathbf{p}}_i^T \mathbf{b}_{i}$.
\end{proposition}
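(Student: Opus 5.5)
The plan is to substitute the parametrization of the invariant subspace from Proposition \ref{prop:constraint_enforcement} directly into \eqref{eq:CNOD-simple} and read off a scalar equation for each agent. On $V_{P_1}\times\cdots\times V_{P_{\Na}}$ with rank-one projections \eqref{eq:proj_matrix_onto_span_of_p}, every state satisfies $\Zz_i = P_i\Zz_i = \hat{\mathbf{p}}_i\hat{\mathbf{p}}_i^T\Zz_i = y_i\hat{\mathbf{p}}_i$ with $y_i = \hat{\mathbf{p}}_i^T\Zz_i$. Conversely, each $\mathbf{y}\in\mathbb{R}^{\Na}$ gives a point of the subspace. The map $\mathbf{y}\mapsto(y_1\hat{\mathbf{p}}_1^T,\dots,y_{\Na}\hat{\mathbf{p}}_{\Na}^T)^T$ is therefore a linear bijection onto the invariant subspace, with inverse $\Zz\mapsto(\hat{\mathbf{p}}_i^T\Zz_i)_i$. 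This shows that the flow restricted to the subspace is exactly (conjugately) described by the evolution of $\mathbf{y}$.

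Next I differentiate $y_i=\hat{\mathbf{p}}_i^T\Zz_i$ along trajectories to get $\dot y_i = \hat{\mathbf{p}}_i^T P_i\mathbf{F}_i(\Zz)$. Because $\hat{\mathbf{p}}_i^T\hat{\mathbf{p}}_i = 1$, we have $\hat{\mathbf{p}}_i^T P_i = \hat{\mathbf{p}}_i^T$, so $\dot y_i = \hat{\mathbf{p}}_i^T\mathbf{F}_i(\Zz)$. I will write $\mathbf{F}_i$ componentwise in vector form as
\[
\mathbf{F}_i(\Zz) = -d\Zz_i + S\Bigl(u\bigl(\alpha\Zz_i + \gamma\textstyle\sum_{k\neq i}(\Aa)_{ik}\Zz_k\bigr)\Bigr) + \mathbf{b}_i,
\]
where $S$ acts entrywise. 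Substituting $\Zz_k = y_k\hat{\mathbf{p}}_k$ for all $k$ gives three terms:
\begin{itemize}
\item the damping term is $-d\,\hat{\mathbf{p}}_i^T y_i\hat{\mathbf{p}}_i = -d y_i$;
\item the bias term is $\hat{\mathbf{p}}_i^T\mathbf{b}_i = b_{ei}$;
\item the saturation term becomes $\hat{\mathbf{p}}_i^T S\bigl(u(\alpha y_i\hat{\mathbf{p}}_i + \gamma\sum_{k\neq i}(\Aa)_{ik}y_k\hat{\mathbf{p}}_k)\bigr)$, which is exactly the middle term of \eqref{eq:CNOD-multi-agnet-y-domain}.
\end{itemize}

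Finally, I need to confirm that nothing is lost by tracking only $\mathbf{y}$, i.e.\ that the full $\dot\Zz_i$ equals $\dot y_i\hat{\mathbf{p}}_i$. This follows because $P_i\mathbf{F}_i = \hat{\mathbf{p}}_i(\hat{\mathbf{p}}_i^T\mathbf{F}_i) = \Phi_i\,\hat{\mathbf{p}}_i$. Hence $\mathbf{y}(t)$ solving $\dot{\mathbf{y}}=\Phi(\mathbf{y},u)$ reconstructs the solution $\Zz_i(t)=y_i(t)\hat{\mathbf{p}}_i$ of \eqref{eq:CNOD-simple}, and uniqueness of solutions (since $S$ is smooth) makes the correspondence exact.

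There is no real obstacle here. The only points needing care are that $S$ is applied entrywise, so it cannot be pulled through the projection; the saturation term must stay inside the inner product $\hat{\mathbf{p}}_i^T S(\cdot)$ rather than reduce to a scalar sigmoid. I also need the use of $\|\hat{\mathbf{p}}_i\|=1$ in the two simplifications above.
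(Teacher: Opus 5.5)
Your proposal is correct and follows essentially the same route as the paper: differentiate $y_i=\hat{\mathbf{p}}_i^T\Zz_i$, use $\hat{\mathbf{p}}_i^T P_i=\hat{\mathbf{p}}_i^T$, and substitute $\Zz_k=y_k\hat{\mathbf{p}}_k$ and $b_{ei}$ into $\mathbf{F}_i$. Your extra checks make explicit the ``exactly characterized'' claim that the paper leaves implicit: that $\mathbf{y}\mapsto(y_i\hat{\mathbf{p}}_i)_i$ is a linear bijection onto the invariant subspace, and that $P_i\mathbf{F}_i=\Phi_i\hat{\mathbf{p}}_i$, so the reduced flow reconstructs the full one.
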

\begin{proof}
    Differentiating $y_i = \hat{p}_i^T \Zz_i$ along trajectories of \eqref{eq:CNOD-simple} yields $\dot{y}_i = \hat{p}_i^T \dot{\Zz}_i = \hat{\mathbf{p}}_i^T P_i \mathbf{F}_i(\Zz) =\hat{\mathbf{p}}_i^T \mathbf{F}_i(\Zz) $. The expression then follows by direct substitution of $F_{ij}(\Zz)$ and substitution of $\Zz_i = y_i \hat{\mathbf{p}}_i$, $b_{ei}$.
\end{proof}

Observe that the effective bias $b_{ei}$ of agent $i$ in Proposition \ref{prop:reduced_dyn} depends on the projection of the bias of  its individual options $\mathbf{b}_i$ along the constraint vector $\mathbf{p}_i$. So, if the constraint vector contains zero entries, representing an agent's inability in performing a specific task, then no amount of bias corresponding to that option can increase the effective bias. 
Hence, the agents are selectively sensitive to the bias due to the presence of the projection constraints.
For example, in a heterogeneous robot team, an aerial drone may have some zero entries in the projection constraint vector corresponding to the options for the ground tasks; therefore, even strong incentives for these ground tasks will not influence its effective decision, as it lacks the physical capability to perform that task.

To characterize the collective decision states that emerge from \eqref{eq:CNOD-multi-agnet-y-domain}, we analyze the bifurcations of equilibria from the neutral effective opinions in the reduced model. The Jacobian $J$ of Eqn. \eqref{eq:CNOD-multi-agnet-y-domain} has entries $J_{ij} =  \frac{\partial \Phi_{i}}{\partial y_{j}}$ for $i,j \in \{1,\cdots,\Na\}$. 
The equilibria of Eqn. \eqref{eq:CNOD-multi-agnet-y-domain} are the level sets $\Phi \left(\mathbf{y},u \right) = 0$ which defines the bifurcation diagram of the system. Jacobian $J$ evaluated at an equilibrium point $(\mathbf{y}^{*}, u^*)$ is a singular matrix with rank $\Na-1$. Thus, the local bifurcation diagram can be described using a single variable and this point is a singular point. The Lyapunov-Schmidt reduction of $\Phi \left(\mathbf{y},u \right)$  gives $h(x, u)$ that results in a one-dimensional equation that describes the structure of the local bifurcation of the system given in Eqn. \eqref{eq:CNOD-multi-agnet-y-domain} near the equilibrium point. The Lyapunov-Schmidt reduction is derived by projecting the Taylor expansion of $\Phi \left(\mathbf{y},u \right)$ onto the Kernel of its Jacobian at the singularity \cite[Chapter VII]{schaeffer1985singularities}. The implicit function theorem is used to solve for $\Na-1$ variables as a function of a single variable, thus approximating the local vector field orthogonal to the kernel.

The Jacobian of Eqn. \eqref{eq:CNOD-multi-agnet-y-domain} evaluated at $\mathbf{y} = \mathbf{0}$ is  
\begin{subequations}
\label{eq:Jacobian-y-domain}
\begin{align}
J_{y} & = (u\alpha-d)I + u\gamma A_{\rm a}', \label{eq:Jacobian-y-domain_a}\\ 
\left[A_{\rm a}'\right]_{ik} & =(\hat{\mathbf{p}}_i^{T}\hat{\mathbf{p}}_k) \left[A_{\rm a}\right]_{ik} \label{eq:Jacobian-y-domain_b}
\end{align}
\end{subequations}
where $\Aa' $ is the transformed adjacency matrix. 
The presence of projection constraints leads to the emergence of an effective communication network ($\mathcal{G}'$) among the agents, which is captured by the transformed adjacency matrix ($\Aa' $ ). The effective communication between two agents depends on the similarity of their projection constraints. Consequently, even if interactions exist between two agents, there is no effective communication if their projection constraints are orthogonal. Alternatively, this can also be interpreted as a situation in which agents with completely different priorities effectively ignore each other's opinion, even though interaction between them is possible. 

In the following Theorem we classify the bifurcation of the unopinionated equilibrium $\mathbf{y}=\mathbf{0}$ arising from a simple eigenvalue of the Adjacency matrix $\Aa'$ as a pitchfork bifurcation, analogous to results established for the unconstrained NOD frameworks \cite{bizyaeva2021control,bizyaeva2023tac,bizyaeva2025tac}.

\begin{theorem}
\label{thm:Multi_agnet_bifurcation}
Consider Eqn. \eqref{eq:CNOD-multi-agnet-y-domain} and $u^{*} = \frac{d}{\alpha + \lambda \gamma}$, where $\lambda$ is a simple real eigenvalue of $\Aa'$ with eigenvector $\mathbf{v} = \left(v_{1},\hdots,v_{\Na}\right)^{T}$. Assume that $\alpha + \lambda \gamma \neq 0$. For zero bias projection $\left(\mathbf{b}_{e} = \mathbf{0}\right)$, $h(x, u)$ has a symmetric pitchfork singularity at $(x, u) = (0, u^{*})$. For $u>u^{*}$ and sufficiently small $|u-u^{*}|$, two branches of equilibria branch off from $\mathbf{y}=\mathbf{0}$ in a pitchfork bifurcation along a manifold tangent at $\mathbf{y}=\mathbf{0}$ to  $\spant \{\mathbf{v}\}$.  When $u^{*} > 0~\left(<0\right)$, the pitchfork bifurcation happens supercritically (subcritically) with respect to $u$.
On the other hand, for $\left(\mathbf{b}_{e} \neq \mathbf{0}\right)$ the bifurcation problem is an $\Na-$parameter unfolding of the symmetric pitchfork with $\frac{\partial h}{\partial b_{ei}} = v_{i}$.
\end{theorem}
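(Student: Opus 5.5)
The plan is a Lyapunov--Schmidt reduction of $\Phi$ at $(\mathbf{y},u)=(\mathbf{0},u^{*})$, followed by the standard recognition conditions for the pitchfork and its unfolding in \cite[Chapters III, VII]{schaeffer1985singularities}.

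\textbf{Singularity and kernel.} By \eqref{eq:Jacobian-y-domain}, $J_y=(u\alpha-d)I+u\gamma\Aa'$. Since $\Aa'$ is symmetric (because $\hat{\mathbf{p}}_i^T\hat{\mathbf{p}}_k$ is symmetric in $i,k$ and $\Aa$ is symmetric), $J_y$ has eigenvalues $u(\alpha+\gamma\mu)-d$ for $\mu\in\eig(\Aa')$. At $u=u^{*}$ the eigenvalue associated with $\lambda$ vanishes. Because $\lambda$ is simple, $\ker J_y=\spant\{\mathbf{v}\}$. Symmetry of $J_y$ also gives $(\range J_y)^{\perp}=\spant\{\mathbf{v}\}$, so the same vector $\mathbf{v}$, normalized with $\|\mathbf{v}\|=1$, serves as both the right and the left null vector. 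The assumption $\alpha+\lambda\gamma\neq0$ ensures that $u^{*}$ is well defined.

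\textbf{Reduced function and its derivatives.} I would write $\mathbf{y}=x\mathbf{v}+W(x,u,\mathbf{b}_e)$ with $W\in\range J_y$, solve the complementary equations by the implicit function theorem, and set $h(x,u,\mathbf{b}_e)=\mathbf{v}^T\Phi(x\mathbf{v}+W,u)$. Then I would apply the standard formulas:
\[
h_x=0,\qquad h_{xx}=\mathbf{v}^T(d^2\Phi)(\mathbf{v},\mathbf{v}),\qquad h_{xxx}=\mathbf{v}^T\bigl[(d^3\Phi)(\mathbf{v},\mathbf{v},\mathbf{v})-3(d^2\Phi)(\mathbf{v},J_y^{-1}E(d^2\Phi)(\mathbf{v},\mathbf{v}))\bigr],
\]
\[
h_u=\mathbf{v}^T\Phi_u,\qquad h_{xu}=\mathbf{v}^T\bigl[(d\Phi_u)\mathbf{v}-(d^2\Phi)(\mathbf{v},J_y^{-1}E\Phi_u)\bigr],
\]
where $E$ is the projection onto $\range J_y$. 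With $\mathbf{b}_e=\mathbf{0}$, the map $\Phi$ is odd in $\mathbf{y}$ because $S$ is odd. This gives $\Phi(\mathbf{0},u)=0$ and $\Phi_u(\mathbf{0},u)=0$, and it makes every even-order derivative at $\mathbf{0}$ vanish. Since $S''(0)=0$ (from $\sign S''(z)=-\sign z$), $d^2\Phi=0$ at the origin. Hence $h=h_x=h_{xx}=h_u=0$, and $h$ inherits the $\mathbb{Z}_2$ symmetry $h(-x,u)=-h(x,u)$.

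\textbf{Nondegeneracy conditions (main step).} Two quantities must be computed and shown nonzero.

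The first is the mixed derivative. Here $d\Phi_u$ at $\mathbf{0}$ equals $\alpha I+\gamma\Aa'$, so
\[
h_{xu}=\mathbf{v}^T(\alpha I+\gamma\Aa')\mathbf{v}=\alpha+\gamma\lambda=d/u^{*}\neq0 .
\]

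The second is the cubic coefficient. Since $d^2\Phi=0$, only the third-order term survives, and it is
\[
h_{xxx}=u^{*3}S'''(0)\sum_i v_i\,\hat{\mathbf{p}}_i^T\bigl(\mathbf{w}_i\circ\mathbf{w}_i\circ\mathbf{w}_i\bigr),\qquad \mathbf{w}_i=\alpha v_i\hat{\mathbf{p}}_i+\gamma\sum_k(\Aa)_{ik}v_k\hat{\mathbf{p}}_k ,
\]
where $\circ$ is the entrywise product. I expect this step to be the main obstacle. $S'''(0)<0$ follows from the sign condition on $S''$, but the sign and nonvanishing of the sum are not automatic, since the vectors $\mathbf{w}_i$ mix different constraint directions. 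I would first try to show the sum is strictly positive, possibly treating its nonvanishing as a generic condition. The factor $u^{*3}$ then carries the sign of $u^{*}$. The recognition problem for the pitchfork requires exactly $h_{xxx}\neq0$ and $h_{xu}\neq0$, and the sign of their product, controlled by $u^{*}$, decides super- versus subcriticality.

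\textbf{Branch direction and unfolding.} The tangency of the branches to $\spant\{\mathbf{v}\}$ follows because $W=O(x^3)$, which I would check via $W_x=0$ and $W_{xx}=0$. For the unfolding, $\partial\Phi/\partial b_{ei}=\mathbf{e}_i$ and $\partial W/\partial b_{ei}$ lies in $\range J_y$, so $\partial h/\partial b_{ei}=\mathbf{v}^T\mathbf{e}_i=v_i$. This shows that the $\Na$ biases enter as unfolding parameters of the symmetric pitchfork.
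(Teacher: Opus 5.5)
\textbf{Verdict: genuine gap, at the step you flagged, and it cannot be closed under the stated hypotheses.} Your Lyapunov--Schmidt setup matches the paper's proof: $\mathbf{v}$ as both null vectors, $d^2\Phi=0$ because $S$ is odd, $h_{xu}=\alpha+\gamma\lambda=d/u^{*}$, and $\partial h/\partial b_{ei}=v_i$. All of those steps are sound. The problem is the sign of the cubic coefficient.

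The paper asserts $b=c\,d\,(u^{*})^{2}S'''(0)<0$. It uses $\hat{\mathbf{p}}_i^T\mathbf{w}_i=(\alpha+\gamma\lambda)v_i$, so that $\sum_{i,j}v_i[\hat{\mathbf{p}}_i]_j[\mathbf{w}_i]_j=d/u^{*}$. It then treats your sum $\Sigma=\sum_{i,j}\bigl(v_i[\hat{\mathbf{p}}_i]_j[\mathbf{w}_i]_j\bigr)[\mathbf{w}_i]_j^{2}$ as that total times an average $c\ge\min_{i,j}[\mathbf{w}_i]_j^{2}$. This needs all weights $v_i[\hat{\mathbf{p}}_i]_j[\mathbf{w}_i]_j$ to share one sign. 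That holds for homogeneous constraints, where the weights are $(\alpha+\gamma\lambda)v_i^2[\hat{\mathbf{p}}]_j^2$, but fails for heterogeneous ones.

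Here is a counterexample. Take $\Na=2$, $\No=4$, and $\hat{\mathbf{p}}_{1,2}=\tfrac{1}{\sqrt5}(1,1,1,\pm\sqrt2)^T$. Then $\Aa'$ has the simple, dominant eigenvalue $\lambda=\tfrac15$, with $\mathbf{v}\propto(1,1)^T$ and $u^{*}>0$. Because $S$ acts componentwise and is odd, flipping the fourth coordinate maps $\hat{\mathbf{p}}_1$ to $\hat{\mathbf{p}}_2$. This makes the reduced system equivariant under swapping the agents, so the diagonal $y_1=y_2=y$ is invariant. On it,
\[
\dot y=\bigl(u(\alpha+\tfrac{\gamma}{5})-d\bigr)y+\tfrac{S'''(0)}{150}\,u^{3}\bigl[3(\alpha+\gamma)^{3}+4(\alpha-\gamma)^{3}\bigr]y^{3}+O(y^{5}).
\]
For $0<\alpha<0.047\,\gamma$ the bracket is negative, so the cubic coefficient is positive. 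The branches then exist only for $u<u^{*}$: the bifurcation is subcritical even though $u^{*}>0$, which also contradicts the Corollary that follows the theorem. Where the bracket vanishes, the singularity is not a pitchfork at all.

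What is missing is an extra hypothesis, not a sharper estimate. Set $\tilde{\mathbf{w}}_i=u^{*}\mathbf{w}_i$, so that $\hat{\mathbf{p}}_i^T\tilde{\mathbf{w}}_i=d\,v_i$. Then $h_{xxx}=S'''(0)\kappa$ with $\kappa=\sum_i v_i\hat{\mathbf{p}}_i^T(\tilde{\mathbf{w}}_i\circ\tilde{\mathbf{w}}_i\circ\tilde{\mathbf{w}}_i)=(u^{*})^{3}\Sigma$.
\begin{itemize}
\item The pitchfork claim needs $\kappa\neq0$.
\item The stated super/subcritical dichotomy in the sign of $u^{*}$ holds exactly when $\kappa>0$. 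This is the case for homogeneous constraints, where $\kappa=d^{3}\|\hat{\mathbf{p}}\|_4^4\sum_i v_i^4$.
\item $\kappa<0$ reverses the criticality.
\end{itemize}
So your idea of imposing a generic condition is right, but it has to be a sign condition, not just nonvanishing.

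Your intended target is also mis-signed. Proving $\Sigma>0$ would give $h_{xu}h_{xxx}=d\,(u^{*})^{2}S'''(0)\Sigma<0$ for either sign of $u^{*}$, so the criticality would not depend on $u^{*}$ at all. Even in the homogeneous case, $\Sigma=\kappa/(u^{*})^{3}$ has the sign of $u^{*}$. It is $h_{xxx}$, not $\Sigma$, that is negative, and the dependence on $u^{*}$ comes entirely from $h_{xu}=d/u^{*}$.
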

\begin{proof}
The eigenvalues of the Jacobian $J_{y}$ in Eqn. \eqref{eq:Jacobian-y-domain_a} are $\eig (J_{y}) = u\alpha-d + u\gamma \lambda$. So, $J_{y}$ has a single zero eigenvalue when $u=u^{*}$. The left and right eigenvector for this zero eigenvalue is $\mathbf{v}$. 
We follow the procedure outlined in \cite[Chapter I]{schaeffer1985singularities} to derive the polynomial expansion of $h(x, u)$ near $(x, u) = (0, u^{*})$ through third order in state variable. 
Since, $S$ is an odd sigmoidal function, $S''(0) =0$ and $S'''(0)<0$. Hence, $\left(d^{2} \Phi\right)_{\mathbf{0},u^{*}}\left(\mathbf{v}_{1}, \mathbf{v}_{2}\right) = 0$ for any $\mathbf{v}_{l}$ where $l \in \{1,2\}$. 
Furthermore, for zero bias projection, $h_{x}\left(0, u^{*}\right)=0$. 
The truncated series expansion of the Lyapunov-Schmidt reduction of Eqn. \eqref{eq:CNOD-multi-agnet-y-domain} about $(0, u^{*})$ is:
\begin{equation}
\label{eqn:lyapunov-schmidt-reduction-expression}
h(x, u) = a\hat{u}x+bx^{3}+\mathbf{v}^{T}\mathbf{b}_{e}
\end{equation}
where $\hat{u} = u - u^{*}$.
The coefficients of expansion $a, b \in \mathbb{R}$ are as follows:
\begin{equation*}
\begin{aligned}
a = \mathbf{v}^{T} \left(d\frac{\partial \Phi}{\partial \hat{u}}\right)_{\mathbf{0}, u^{*}}\left(\mathbf{v}, \mathbf{v}\right) = \alpha + \lambda \gamma  \\
b = \mathbf{v}^{T} \left(d^{3} \Phi\right)_{\mathbf{0},u^{*}}\left(\mathbf{v}, \mathbf{v},\mathbf{v}\right) = cd\left(u^{*}\right)^{2}S'''(0) <0
\end{aligned}
\end{equation*}
where $c \geq \min_{i,j} \left(\alpha v_{i} \left[\hat{\mathbf{p}}_i\right]_{j} + \gamma \, \sum_{\substack{k=1 \\ k \neq i}}^\Na(A_{\rm a})_{ik} v_{k}\left[\hat{\mathbf{p}_k}\right]_{j}  \right)^{2}$ for $i \in \{1,\cdots, \Na\}$ and $j \in \{1,\cdots,\No\}$. Furthermore, $\frac{\partial h}{\partial b_{ei}} = v_{i}$. 
The zero bias projection part of the proof follows by the recognition problem given in  
for the pitchfork bifurcation  \cite[Proposition II.9.2]{schaeffer1985singularities} applied to the reduced equation in Eqn. \ref{eqn:lyapunov-schmidt-reduction-expression} and the definition of center manifold. The unfolding of the pitchfork for $\left(\mathbf{b}_{e} \neq \mathbf{0}\right)$ follows from the unfolding theory.
\end{proof}

\subsection{Social Influence Redistribution}

It follows from Eqn. \eqref{eq:Jacobian-y-domain_b} that the presence of heterogeneous projection constraints leads to the emergence of a weighted undirected communication network. We are interested in understanding how these constraints redistribute the influence of agents within the network. 
\begin{corollary}
Suppose $\mathcal{G}'$ is a connected and structurally balanced (unsigned) network. Then for the simple dominant eigenvalue $\lambda_{\max}'$ of $\Aa'$, $u^{*}_{s} = \frac{d}{\alpha + \lambda_{\max}' \gamma} >0$ and the pitchfork bifurcation at $u^{*}_{s}$ happens supercritically.
\end{corollary}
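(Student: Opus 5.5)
The plan is to reduce the corollary to Theorem~\ref{thm:Multi_agnet_bifurcation} by showing that $\lambda_{\max}'>0$. Then $\alpha+\lambda_{\max}'\gamma>0$ because $\alpha,\gamma,d>0$, and the supercriticality statement of the theorem applies directly.

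First, I will reduce the structurally balanced case to the unsigned case by a gauge transformation. Let $\mathcal{V}_1\cup\mathcal{V}_2$ be the bipartition and set $D=\operatorname{diag}(\sigma_1,\dots,\sigma_{\Na})$, with $\sigma_i=1$ for $i\in\mathcal{V}_1$ and $\sigma_i=-1$ for $i\in\mathcal{V}_2$. Then $D\Aa'D$ has nonnegative entries, is symmetric, and has the same zero pattern as $\Aa'$, so it is irreducible because $\mathcal{G}'$ is connected. It is also similar to $\Aa'$, since $D^{-1}=D$, so the two matrices share their spectrum.

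Equivalently, at the level of the model, flipping $\hat{\mathbf{p}}_i\mapsto-\hat{\mathbf{p}}_i$ for $i\in\mathcal{V}_2$ (which gives $y_i\mapsto -y_i$) leaves $P_i$ unchanged. This makes the unsigned parenthetical in the statement consistent.

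Next, I apply Perron--Frobenius (\cite[Theorem 2.12]{bullo2018lectures}), as recalled in Section~\ref{sec:Notation and Definitions}, to the irreducible nonnegative symmetric matrix $D\Aa'D$. It gives a simple dominant eigenvalue $\lambda_{\max}'$, real by symmetry, with a strictly positive eigenvector $\mathbf{w}$. The corresponding eigenvector of $\Aa'$ is $\mathbf{v}=D\mathbf{w}$.

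To show $\lambda_{\max}'>0$ strictly, note that $\operatorname{tr}(\Aa')=0$ because $a_{ii}=0$. Connectivity with $\Na\ge 2$ forces some off-diagonal entry to be nonzero, so $\Aa'\neq 0$. A nonzero real symmetric matrix with zero trace must have a positive eigenvalue, so $\lambda_{\max}'>0$. Alternatively, one can use the Rayleigh quotient $\mathbf{w}^T D\Aa'D\mathbf{w}>0$, which holds because $\mathbf{w}>0$ and the matrix is nonnegative with at least one positive entry.

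Hence $\alpha+\lambda_{\max}'\gamma>0$. This gives $u_s^*=d/(\alpha+\lambda_{\max}'\gamma)>0$ and ensures the non-degeneracy hypothesis $\alpha+\lambda\gamma\neq 0$ of Theorem~\ref{thm:Multi_agnet_bifurcation}. Simplicity of $\lambda_{\max}'$ is exactly the remaining hypothesis of that theorem. Its conclusion then yields a symmetric pitchfork at $u_s^*$, supercritical because $u_s^*>0$. Concretely, $a=\alpha+\lambda_{\max}'\gamma>0$ and $b<0$ in \eqref{eqn:lyapunov-schmidt-reduction-expression}.

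The main obstacle is the careful handling of the signed, structurally balanced case. Perron--Frobenius applies only after the gauge transformation $D$, so I must verify two things: that connectivity of $\mathcal{G}'$ (rather than of $\mathcal{G}$) is what guarantees irreducibility, and that the simplicity and realness of $\lambda_{\max}'$ transfer back to $\Aa'$ through the similarity. The rest is a direct invocation of the theorem.
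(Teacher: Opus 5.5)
The spectral part of your argument is correct. The paper gives no separate proof of this corollary: it just combines Perron--Frobenius with Theorem~\ref{thm:Multi_agnet_bifurcation}, which is also your route. Your gauge $D$ makes $D\Aa'D$ nonnegative and irreducible, so $\lambda_{\max}'$ is simple, and $\lambda_{\max}'>0$ gives $\alpha+\lambda_{\max}'\gamma>0$ and $u_s^*>0$.

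\textbf{The gap is the final step}, where you take $b<0$ from the theorem. That step, not the signed case, is the real obstacle, and it does not follow from the hypotheses. Since $S''(0)=0$, the cubic coefficient of the Lyapunov--Schmidt reduction is, up to a positive factor, $S'''(0)(u^*)^3\sum_{i,j}v_i[\hat{\mathbf p}_i]_j\,\xi_{ij}^3$, where $\xi_{ij}=\alpha v_i[\hat{\mathbf p}_i]_j+\gamma\sum_k(\Aa)_{ik}v_k[\hat{\mathbf p}_k]_j$. For homogeneous constraints $\xi_{ij}=(\alpha+\gamma\lambda)v_i[\hat{\mathbf p}_i]_j$, so the sum equals $(\alpha+\gamma\lambda)^3\sum_{i,j}v_i^4[\hat{\mathbf p}_i]_j^4>0$. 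With heterogeneous $\hat{\mathbf p}_i$, only the linear identity $\sum_j[\hat{\mathbf p}_i]_j\xi_{ij}=(\alpha+\gamma\lambda)v_i$ survives. The cubic sum is then not a sum of even powers, so the bound $c\ge\min(\cdot)^2$ in the theorem's proof is unjustified. Note also that the sign of $u^*$ fixes only the sign of $a$, not of $b$.

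\textbf{Counterexample.} The sum can be negative under your hypotheses. Take the path $1$--$2$--$3$ (a star centred at $2$) with $\No=2$, $\mathbf p_2=(1,2)^T$ and $\mathbf p_1=\mathbf p_3=(-1.98,1.04)^T$. Then $\Aa'=c\,\Aa$ with $c=\hat{\mathbf p}_1^{T}\hat{\mathbf p}_2\approx0.02>0$. So $\mathcal G'$ is connected and unsigned, $\lambda_{\max}'=\sqrt2\,c$ is simple, $\mathbf v=(1,\sqrt2,1)^T$, and $u_s^*>0$. Yet for $\alpha=0.05$, $\gamma=1$ the sum is about $-0.46$; it tends to $-24\sqrt2\,\gamma^3/25$ as $c\to0$ and $\alpha/\gamma\to0$.

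The mechanism is that agent $2$'s neighbour input is almost orthogonal to $\hat{\mathbf p}_2$. Then $\hat{\mathbf p}_2^{T}S(\cdot)$ is a near-cancellation, and saturation of the larger entry makes the projection grow superlinearly. Thus $a>0$ and $b>0$, and the branches $x^2=-a\hat u/b$ exist for $u<u_s^*$, where the origin is stable. The pitchfork is subcritical, so the statement cannot be proved in this generality.

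\textbf{What is missing} is a sign condition on the constraint vectors. Suppose there are signs $\sigma_i$ with $\sigma_i\hat{\mathbf p}_i$ entrywise nonnegative. This holds in all of the paper's simulations, and it also forces structural balance with the partition given by $\sigma$. With your $\mathbf v=D\mathbf w$, $\mathbf w>0$, the numbers $\eta_{ij}=v_i[\hat{\mathbf p}_i]_j=w_i\sigma_i[\hat{\mathbf p}_i]_j$ are then nonnegative. Hence $\xi_{ij}=\alpha\eta_{ij}+\gamma\sum_k(\Aa)_{ik}\eta_{kj}\ge\alpha\eta_{ij}\ge0$, and the sum is at least $\alpha^3\sum_{i,j}\eta_{ij}^4>0$. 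Together with $S'''(0)<0$ this gives $b<0$, and your argument then goes through. Note that $S'''(0)<0$ is slightly more than $\sign S''(z)=-\sign z$ guarantees, so it should be assumed explicitly.
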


The eigenvector centrality of $\Aa'$ arises from two different factors: (a) the network topology of $\mathcal{G}$ and (b) the projection constraints. 
Therefore, even agents having equal influence in $\mathcal{G}$ can have unequal influence in $\mathcal{G}'$ due to projection constraints. This is formalized in the following theorem.

\begin{theorem}
Let $\mathcal{G} = \left(\mathcal{V}, \mathcal{E}, \Aa\right)$ be an undirected, connected, d-regular network with $\Aa$ having entries $\left[\Aa\right]_{ij} \in \{0,1\}$ and  $\left[\Aa\right]_{ii} =0 ~\forall~ i \in \mathcal{V}$. Let $\mathcal{G}'$ be the effective communication network induced by the heterogeneous projection constraints. Assume that $\mathcal{G}'$ is either unsigned or structurally balanced. Then, the dominant eigenvector of the Adjacency matrix $\Aa'$ corresponding to network $\mathcal{G}'$  is not proportional to $\mathbf{1}_{\Na}$, implying that the agents no longer have equal influence.
\end{theorem}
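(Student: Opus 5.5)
The plan is to argue by contradiction through the eigen-equation. Write $\Aa' = C \circ \Aa$, where $C_{ik} = \hat{\mathbf{p}}_i^T\hat{\mathbf{p}}_k \in [-1,1]$, as in \eqref{eq:Jacobian-y-domain_b}. Suppose the dominant eigenvector of $\Aa'$ were proportional to $\mathbf{1}_{\Na}$.

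\emph{Unsigned case.} Since $\mathcal{G}'$ is connected, Perron--Frobenius makes $\lambda'_{\max}$ simple with a strictly positive eigenvector. The relation $\Aa'\mathbf{1}_{\Na} = \lambda'_{\max}\mathbf{1}_{\Na}$ is equivalent to the weighted degrees $s_i := \sum_{k}(\Aa)_{ik}\,\hat{\mathbf{p}}_i^T\hat{\mathbf{p}}_k$ all being equal. So the proof reduces to showing that heterogeneous constraints force $s_i \neq s_j$ for some $i,j$.

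\emph{Structurally balanced case.} Use the gauge transformation $D=\operatorname{diag}(\pm 1)$ given by the bipartition $\mathcal{V}_1\cup\mathcal{V}_2$. Then $D\Aa'D$ is unsigned and irreducible, and its Perron vector $\mathbf{w}>0$ gives the dominant eigenvector $D\mathbf{w}$ of $\Aa'$. If both $\mathcal{V}_1$ and $\mathcal{V}_2$ are nonempty, $D\mathbf{w}$ has entries of both signs, so it cannot be proportional to $\mathbf{1}_{\Na}$. If instead one side of the bipartition is empty, we are back in the unsigned case.

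\emph{Key step: the weighted degrees are unequal.} Two observations suggest this. First, $s_i \le d$, with equality iff every neighbour of $i$ has $\hat{\mathbf{p}}_k=\hat{\mathbf{p}}_i$. Second, connectivity propagates any such equality to the whole graph. Hence if $s_i=d$ for all $i$, the constraints would be homogeneous. This handles the ``extremal'' case, but not the case where all $s_i$ are equal to a common value $s<d$.

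To make the genericity intuition quantitative, I would also invoke Lemma~\ref{lemma:eigen_vecotr_value_rate}. Perturb $\Aa$ to $\Aa'$ with $K'(\delta)=(C-\mathbf{1}\mathbf{1}^T)\circ\Aa$. The first-order eigenvector correction is proportional to $F(0)^{+}(C-\mathbf{1}\mathbf{1}^T)\circ\Aa\,\mathbf{1}$ with $F(0)=\Aa-dI$. This correction vanishes only when the perturbed weighted degrees remain uniform. So, to first order, the centrality shifts toward agents whose constraint vectors are better aligned with those of their neighbours.

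\emph{Main obstacle.} The step I expect to fail as literally stated is excluding a constant but sub-maximal weighted degree. For example, a complete graph with all pairwise alignments $\hat{\mathbf{p}}_i^T\hat{\mathbf{p}}_k=1/2$ is heterogeneous, yet $\Aa'\mathbf{1}_{\Na}=\tfrac{\Na-1}{2}\mathbf{1}_{\Na}$. My plan is therefore to make explicit the interpretation of ``heterogeneous'' as ``the constraint-induced weighted degrees $s_i$ are not all equal.'' This holds generically, since equality of all $s_i$ is a finite set of polynomial identities on the $\hat{\mathbf{p}}_i$ and so defines a measure-zero set. Under that hypothesis, the contradiction with $\Aa'\mathbf{1}_{\Na}=\lambda'_{\max}\mathbf{1}_{\Na}$ is immediate, and the conclusion follows.
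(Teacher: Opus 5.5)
Your approach is the paper's approach. The step you expect to fail is exactly where the paper's own proof fails.

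\textbf{The key step is false, so the statement as written cannot be proved.} The paper also argues through weighted degrees, since $\Aa'\mathbf{1}_{\Na}$ is the degree vector of $\mathcal{G}'$. It closes the argument by asserting that, because the edge weights are unequal for some agents, ``consequently, the agents no longer have the same degree.'' No justification is given, and your example shows the inference is false. Your example is realizable. Take $\Na=\No=3$ (the complete graph, which is also the ring, $d=2$) with $\mathbf{p}_1=(1,1,0)^T$, $\mathbf{p}_2=(0,1,1)^T$, $\mathbf{p}_3=(1,0,1)^T$; then $\Aa'=\tfrac12\Aa$. Simpler counterexamples also exist:
\begin{itemize}
\item any bipartite $d$-regular graph carrying $\mathbf{p}$ on one side and $\mathbf{q}$ on the other, with $\hat{\mathbf{p}}^T\hat{\mathbf{q}}>0$, gives $\Aa'=(\hat{\mathbf{p}}^T\hat{\mathbf{q}})\Aa$;
\item for $\Na=2$, every heterogeneous pair with positive alignment is a counterexample.
\end{itemize}
Your added hypothesis (unequal weighted degrees) is precisely what the paper uses tacitly. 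Be clear, though, that in the unsigned connected case this hypothesis is equivalent to the conclusion, so it restates rather than proves it. The substantive, hypothesis-free content of your plan is the extremal observation that $s_i=d$ for all $i$ forces homogeneity. Equivalently, under heterogeneous constraints $\mathbf{1}_{\Na}$ can remain the dominant eigenvector only with $\lambda'_{\max}<d$.

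\textbf{Smaller points.}
\begin{itemize}
\item \emph{Genericity.} Your measure-zero argument needs $s_i-s_j\not\equiv 0$ on the configuration space. This fails for $d=1$: that is the case $\Na=2$, where $s_1=s_2$ identically. For $d\ge 2$ you should exhibit a witness. For example, let only node $1$ differ, with alignment $c\neq 1$; then $s_1=dc\neq d-1+c=s_j$ for a neighbour $j$.
\item \emph{Structurally balanced case.} Your gauge argument proves literal non-proportionality to $\mathbf{1}_{\Na}$, but not the ``unequal influence'' claim. The magnitudes $|D\mathbf{w}|=\mathbf{w}$ are uniform whenever the absolute weighted degrees $\sum_k(\Aa)_{ik}|\hat{\mathbf{p}}_i^T\hat{\mathbf{p}}_k|$ are constant, as in the bipartite example with $\hat{\mathbf{p}}^T\hat{\mathbf{q}}<0$. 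A mixed sign pattern already arises for $\hat{\mathbf{p}}_i=\pm\hat{\mathbf{p}}$, which gives identical projections $P_i$. So, for the influence statement, your repaired hypothesis should be imposed on $D\Aa'D$.
\item \emph{Connectivity.} Connectivity of $\mathcal{G}'$ does not follow from that of $\mathcal{G}$, because orthogonal constraint vectors delete edges. You need it for simplicity of $\lambda'_{\max}$, so list it explicitly, as the paper does in the preceding corollary.
\end{itemize}
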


\begin{proof}
The graph $\mathcal{G}$ is unweighted and regular. Hence, the eigenvector centrality of $\Aa$ is $\mathbf{1}_{\Na}$ and all agents in $\mathcal{G}$ have equal influence in the network.  
However, in $\mathcal{G}'$, the interactions are weighted and the weights are unequal for some agents due to the heterogeneous projection constraints. Consequently, the agents no longer have the same degree. Therefore, the eigenvector centrality of $\Aa'$ is $\mathbf{v}_{\max}' \neq \mathbf{1}_{\Na}$, and the agents in $\mathcal{G}'$ have unequal influence.
\end{proof}

Now that we have shown the unequal influence of the agents in $\mathcal{G}'$ due to heterogeneous projection constraints, we next show some standard networks and their changes in the eigenvector centrality due to projection constraints.

\begin{theorem}
Let $\mathcal{G}$ be an undirected, connected, star network, and let $\mathcal{G}'$ be the effective communication network induced by the heterogeneous projection constraints. Assume that $\hat{\mathbf{p}_1}^{T}.\hat{\mathbf{p}_j} \neq 0 $, where 1 is the central node and $j \in \{2,\cdots,\Na\}$ are the outer nodes.
The relative influence of the outer nodes is proportional to $\hat{\mathbf{p}_1}^{T}.\hat{\mathbf{p}_j}$. 
Furthermore, the most influential agent in $\mathcal{G}$ and $\mathcal{G}'$ remains unchanged. 
\end{theorem}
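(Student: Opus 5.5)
Since a star graph has an explicit adjacency structure, we compute the dominant eigenpair of $\Aa'$ in closed form rather than appealing to perturbation arguments. Label the central node $1$ and write $w_j = \hat{\mathbf{p}}_1^T\hat{\mathbf{p}}_j$ for $j\in\{2,\dots,\Na\}$. By \eqref{eq:Jacobian-y-domain_b}, the only nonzero entries of $\Aa'$ are $[\Aa']_{1j}=[\Aa']_{j1}=w_j$. So $\Aa'$ is a weighted star with weight vector $\mathbf{w}=(w_2,\dots,w_\Na)^T$, and $\mathbf{w}$ has no zero entries by assumption. We will look for eigenvectors of the form $\mathbf{v}=(v_1,v_2,\dots,v_\Na)^T$ satisfying $\Aa'\mathbf{v}=\lambda\mathbf{v}$. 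Componentwise this reads
\begin{equation*}
\lambda v_1=\sum_{j=2}^{\Na} w_j v_j,\qquad \lambda v_j = w_j v_1,\quad j\ge 2 .
\end{equation*}

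For $\lambda\neq 0$, the second equation gives $v_j = w_j v_1/\lambda$. Substituting into the first gives $\lambda^2=\|\mathbf{w}\|^2$, so $\lambda_{\max}'=\|\mathbf{w}\|=\big(\sum_{j\ge2}(\hat{\mathbf{p}}_1^T\hat{\mathbf{p}}_j)^2\big)^{1/2}$. The remaining eigenvalues are $-\|\mathbf{w}\|$ and $0$, the latter with multiplicity $\Na-2$ and eigenvectors supported on the leaves and orthogonal to $\mathbf{w}$. Hence $\lambda_{\max}'$ is simple and strictly dominant in real value. Choosing $v_1=\|\mathbf{w}\|$, the dominant eigenvector is $\mathbf{v}'=(\|\mathbf{w}\|,\,w_2,\dots,w_\Na)^T$. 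Its outer-node entries are exactly $\hat{\mathbf{p}}_1^T\hat{\mathbf{p}}_j$, which proves the first claim. As a consistency check, when all constraints coincide we have $w_j=1$, and $\mathbf{v}'$ reduces to the unconstrained star eigenvector $(\sqrt{\Na-1},\mathbf{1}^T_{\Na-1})^T$ recalled in Section \ref{sec:Notation and Definitions}.

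Some $w_j$ may be negative; a star is a tree, so $\mathcal{G}'$ is automatically structurally balanced. To measure influence we therefore use magnitudes $|v_j'|$, consistent with the gauge transformation $D=\mathrm{diag}(\pm1)$ that maps a structurally balanced graph to an unsigned one. The relative influence of outer node $j$ is then $|w_j|$, with the sign of $w_j$ only indicating which side of the bipartition node $j$ lies on.

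For the second claim we compare the central entry with the leaf entries. We have $|v_1'|=\|\mathbf{w}\|$, and $\|\mathbf{w}\|\ge |w_j|$ for every $j$, with strict inequality whenever $\Na\ge 3$ because every $w_j\neq0$. So node $1$ is the unique most influential agent in $\mathcal{G}'$, as it is in $\mathcal{G}$, where $\sqrt{\Na-1}>1$. The main obstacle is not computational but interpretive: we must justify ranking influence by $|v_i'|$ when $\mathcal{G}'$ is signed, and handle the degenerate case $\Na=2$, where the center and the leaf tie. We will state the result for $\Na\ge3$, in which case strict dominance holds, and note that $\Na=2$ gives equal influence both before and after imposing constraints.
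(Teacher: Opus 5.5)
Your proof is correct and takes the same route as the paper. Both compute the dominant eigenpair of the weighted star $\Aa'$ in closed form, namely $\lambda_{\max}'=\big(\sum_{j\ge 2}(\hat{\mathbf{p}}_1^T\hat{\mathbf{p}}_j)^2\big)^{1/2}$ with eigenvector $(\lambda_{\max}',\hat{\mathbf{p}}_1^T\hat{\mathbf{p}}_2,\dots,\hat{\mathbf{p}}_1^T\hat{\mathbf{p}}_{\Na})^T$, and read both claims off it. The paper only asserts this eigenpair, whereas you derive it, check that $\lambda_{\max}'$ is simple, handle negative alignments via structural balance of the tree, and flag the $\Na=2$ tie that the paper's proof glosses over.
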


\begin{proof}
The graph $\mathcal{G}$ is an unweighted star network. The eigenvector centrality of $\Aa$ is $\left(\sqrt{\Na-1}, \mathbf{1}_{\Na-1}^{T} \right)^{T}$. Hence, agent 1 is the most influential agent in $\mathcal{G}$. 
The interactions in $\mathcal{G}'$ are weighted due to heterogeneous projection constraints. Let $S = \sqrt{\sum_{i=2}^{\Na} \left(\hat{\mathbf{p}_1}^{T}.\hat{\mathbf{p}}_i\right)^{2} }$. The dominant eigenvalue of $\mathcal{G}'$ is $\lambda_{\max}' = S$ and the eigenvector centrality is $\left(S, \hat{\mathbf{p}_1}^{T}.\hat{\mathbf{p}_2}, \hdots,  , \hat{\mathbf{p}_1}^{T}.\hat{\mathbf{p}_{\Na}}\right)^{T}$. Hence, agent 1 is the most influential agent in $\mathcal{G}'$. Furthermore, the relative influence of the outer nodes $\{2,\cdots,\Na\}$ depends on their alignment with agent 1.
\end{proof}

The presence of heterogeneous projection constraints makes network $\mathcal{G}'$ weighted, making it difficult to obtain the eigenvector centrality in closed form. Therefore, we instead analyze the first order approximations of the changes in the dominant eigenvalue and its corresponding eigenvector. 

\begin{theorem}
\label{thm:centrality_regular_graph_approx} 
Let $\mathcal{G}$ be an undirected, connected, d-regular network, and let $\mathcal{G}'$ be a connected network with only one node, say node $1$, having heterogeneous projection constraints with respect to all other nodes. The approximate eigenvector centrality of $\mathcal{G}'$ is as follows:
\begin{equation}
\label{eq:centrality_regular_graph_approx}
\mathbf{v}_{\max}' \approx \mathbf{1}_{\Na} + \left( 1 - \hat{\mathbf{p}_1}^{T}.\hat{\mathbf{p}_2}\right) B\left(-d^{2} + d, M^{T}\right)^{T}
\end{equation}
where $B =\frac{1}{2\Na}\mathbf{1}_{\Na}\mathbf{1}_{\Na}^{T} + \left(\left(\Aa-dI\right)^{2}\right)^{\dagger}$, $M = \left(m_{12},\cdots,m_{1\Na}\right)^{T} \in \mathbb{R}^{\Na-1}$, and $\hat{\mathbf{p}_2}$ is the projection constraint of the agents $j \in \{2,\cdots,\Na\}$. Here, $m_{1j} = \sum_{k=1}^{\Na}\left[A_{\rm a}\right]_{1k}\left[A_{\rm a}\right]_{kj} \in \mathbb{R}$ is the number of mutual neighbors of agent $1$ and agent $j$ and $\left(\left(\Aa-dI\right)^{2}\right)^{\dagger}$ is the Moore-Penrose pseudo-inverse of $\left(\Aa-dI\right)^{2}$.
\end{theorem}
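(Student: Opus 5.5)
We treat $\Aa'$ as a symmetric perturbation of $\Aa$ and apply Lemma~\ref{lemma:eigen_vecotr_value_rate} to the dominant eigenpair $(\lambda_{\max},\mathbf{v}_{\max}) = (d,\mathbf{1}_{\Na})$ of $\Aa$. Under the hypothesis, all agents $j\in\{2,\dots,\Na\}$ share the constraint $\hat{\mathbf{p}}_2$, so $\hat{\mathbf{p}}_j^T\hat{\mathbf{p}}_k = 1$ for $j,k\geq 2$. Only node $1$ is misaligned, with $\hat{\mathbf{p}}_1^T\hat{\mathbf{p}}_j = \hat{\mathbf{p}}_1^T\hat{\mathbf{p}}_2$ for all $j\ge 2$. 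By \eqref{eq:Jacobian-y-domain_b}, $\Aa'$ differs from $\Aa$ only in the first row and column, where each edge weight $[\Aa]_{1j}=1$ is replaced by $\hat{\mathbf{p}}_1^T\hat{\mathbf{p}}_2$.

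Set $\delta = 1-\hat{\mathbf{p}}_1^T\hat{\mathbf{p}}_2$ and write $K(\delta) = \Aa - \delta E$, where $E = \mathbf{e}_1\mathbf{a}_1^T + \mathbf{a}_1\mathbf{e}_1^T$ and $\mathbf{a}_1 = \Aa\mathbf{e}_1$ is the first column of $\Aa$. Then $K'(\delta) = F_1'(\delta) = -E$ and $F_1(0) = \Aa - dI$. Because $\mathcal{G}$ is connected, $d$ is a simple eigenvalue of $\Aa$ by Perron--Frobenius, so $\ker(\Aa-dI) = \spant\{\mathbf{1}_{\Na}\}$. We will use the unnormalized $\mathbf{1}_{\Na}$, as the target formula suggests.

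The main computation is the eigenvector derivative
\[
\frac{\partial \mathbf{v}}{\partial\delta} = -\bigl[(\Aa-dI)^2 + c\,\mathbf{1}\mathbf{1}^T\bigr]^{-1}(\Aa-dI)(-E)\mathbf{1}.
\]
\begin{enumerate}[label=(\roman*)]
\item Compute $E\mathbf{1} = \mathbf{e}_1 (\mathbf{a}_1^T\mathbf{1}) + \mathbf{a}_1 = d\,\mathbf{e}_1 + \mathbf{a}_1$.
\item Apply $\Aa - dI$. Since $\Aa\mathbf{e}_1 = \mathbf{a}_1$ and $\Aa\mathbf{a}_1 = \Aa^2\mathbf{e}_1$, we get
\[
(\Aa-dI)(d\mathbf{e}_1+\mathbf{a}_1) = \Aa^2\mathbf{e}_1 - d^2\mathbf{e}_1.
\]
The first entry of $\Aa^2\mathbf{e}_1$ is $d$ (the number of closed $2$-walks at node $1$). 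Its $j$th entry is $m_{1j}$, the number of common neighbors. Hence this vector equals $(-d^2+d, M^T)^T$.
\item Invert the matrix. Because $(\Aa-dI)^2$ is symmetric with kernel $\spant\{\mathbf{1}\}$, the inverse of $(\Aa-dI)^2 + c\,\mathbf{1}\mathbf{1}^T$ splits as $\bigl((\Aa-dI)^2\bigr)^{\dagger} + \frac{1}{c\,\Na^2}\mathbf{1}\mathbf{1}^T$. With the coefficient $2\mathbf{v}\mathbf{v}^T$ of the lemma and $\mathbf{v}=\mathbf{1}$ we have $c=2$, which gives the coefficient $\frac{1}{2\Na^2}$. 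This is where the normalization of $\mathbf{v}$ matters.
\end{enumerate}

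The main obstacle is reconciling the rank-one term with the $\frac{1}{2\Na}$ in $B$. The factor depends on whether the lemma's formula assumes $\|\mathbf{v}\|=1$ or $\mathbf{v}=\mathbf{1}$, and on how the result is rescaled afterward. I would first apply the lemma with the normalized vector $\mathbf{1}/\sqrt{\Na}$ and then rescale by $\sqrt{\Na}$, tracking the factors so that the coefficient comes out as $\frac{1}{2\Na}$.

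I would also remark that the rank-one term only shifts the answer along $\mathbf{1}$. This is immaterial for centrality up to normalization, since $\mathbf{1}^T(-d^2+d, M^T)^T = -d^2 + d + \sum_j m_{1j} = -d^2 + d + d(d-1) = 0$. Indeed $\mathbf{1}^T\Aa^2\mathbf{e}_1 = d^2$, so the rank-one term actually vanishes, and any normalization convention yields \eqref{eq:centrality_regular_graph_approx}.

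Finally, the signs work out as follows. The two minus signs (one from the lemma's formula, one from $F_1' = -E$) cancel, so substituting into \eqref{eq:eigenvector_approximation} gives $\mathbf{v}_{\max}' \approx \mathbf{1} + \delta B(-d^2+d, M^T)^T$, which is the claim.
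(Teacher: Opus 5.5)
Your proposal is correct and follows the paper's own argument. You treat $\Aa'$ as a perturbation of $\Aa$ on the $d$ edges at node $1$ and apply Lemma~\ref{lemma:eigen_vecotr_value_rate} at the eigenpair $(d,\mathbf{1}_{\Na})$; your choice $\delta = 1-\hat{\mathbf{p}}_1^T\hat{\mathbf{p}}_2$ with $K'=-E$ is the sign-flipped version of the paper's $\delta = \hat{\mathbf{p}}_1^T\hat{\mathbf{p}}_2 - 1$. Your explicit computation $(\Aa-dI)E\mathbf{1} = \Aa^2\mathbf{e}_1 - d^2\mathbf{e}_1 = (-d^2+d, M^T)^T$, together with the observation that this vector is orthogonal to $\mathbf{1}_{\Na}$, fills in details the paper's one-line proof leaves implicit: the orthogonality makes the rank-one part of $B$, and with it the normalization ambiguity, drop out.
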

\begin{proof}
The graph $\mathcal{G}$ is an unweighted d-regular network. The eigenvector centrality of $\Aa$ is $\mathbf{1}_{\Na}$. The interactions in $\mathcal{G}'$ are weighted due to the heterogeneous projection constraints. The interaction weights of $d$ edges involving agent $1$ is $\hat{\mathbf{p}_1}^{T}.\hat{\mathbf{p}_2}$ and all other edges weights are either 0 or 1. 
The eigenvector centrality of $\Aa'$ can be approximated using Lemma \ref{lemma:eigen_vecotr_value_rate}, with $\delta = \hat{\mathbf{p}_1}^{T}.\hat{\mathbf{p}_2} - 1$ representing the change in the interaction weights involving agent $1$ due to perturbation in the projection constraint of agent $1$. 
This gives $\lambda_{\max}' \approx d\left(1 - \frac{2}{\Na}\left(1 -\hat{\mathbf{p}_1}^{T}.\hat{\mathbf{p}_2} \right)\right)$ and $\mathbf{v}_{\max}'$ given in Eqn. \eqref{eq:centrality_regular_graph_approx}.
\end{proof}

The following corollaries follow as simple extensions to Theorem \ref{thm:centrality_regular_graph_approx}:

\begin{corollary}
\label{corollary:eig_vec_approx_complete_graph}
Let $\mathcal{G}$ in Theorem \ref{thm:centrality_regular_graph_approx} be a complete network with $d = \Na-1$. The eigenvector centrality is simplified using Sherman–Morrison formula \cite{sherman1950adjustment}.
\begin{equation}
\label{eq:centrality_complete_graph_approx}
\mathbf{v}_{\max}' \approx \mathbf{1}_{\Na} + \frac{\left( 1 - \hat{\mathbf{p}_1}^{T}.\hat{\mathbf{p}_2}\right)}{\Na-2} \left(-\Na+1, \mathbf{1}_{\Na-1}^{T}\right)^{T}
\end{equation}
\end{corollary}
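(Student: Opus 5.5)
The plan is to specialize Theorem~\ref{thm:centrality_regular_graph_approx} to the complete graph. I would compute the two ingredients of \eqref{eq:centrality_regular_graph_approx}, namely the forcing vector $\left(-d^{2}+d, M^{T}\right)^{T}$ and the matrix $B$, in closed form, and then multiply them. The whole computation uses only the fact that for the complete graph $\Aa = \mathbf{1}_{\Na}\mathbf{1}_{\Na}^{T} - I$. Write $J=\mathbf{1}_{\Na}\mathbf{1}_{\Na}^{T}$, so that $J^{2}=\Na J$ and $d=\Na-1$.

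\textbf{Step 1 (forcing vector).} Any two distinct nodes of the complete graph share the remaining $\Na-2$ nodes as common neighbours. Hence $m_{1j}=\Na-2$ for every $j\in\{2,\dots,\Na\}$, so $M=(\Na-2)\mathbf{1}_{\Na-1}$. The first entry is $-d^{2}+d=-(\Na-1)(\Na-2)$. The vector therefore factors as
\[
\mathbf{w}:=(\Na-2)\left(-\Na+1,\ \mathbf{1}_{\Na-1}^{T}\right)^{T}.
\]
This already exhibits the direction $\left(-\Na+1, \mathbf{1}_{\Na-1}^{T}\right)^{T}$ that appears in \eqref{eq:centrality_complete_graph_approx}. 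I would also record that $\mathbf{1}_{\Na}^{T}\mathbf{w}=0$.

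\textbf{Step 2 (inverse via Sherman--Morrison).} Here $\Aa-dI=J-\Na I$, so $(\Aa-dI)^{2}=\Na^{2}I-\Na J$, which is a rank-one update of a multiple of the identity. The matrix that actually appears in Lemma~\ref{lemma:eigen_vecotr_value_rate} is $F_1(0)F_1(0)+2\mathbf{v}\mathbf{v}^{T}$, with $\mathbf{v}$ proportional to $\mathbf{1}_{\Na}$. It is therefore also of the form $\Na^{2}I+cJ$ for an explicit scalar $c$, and Sherman--Morrison \cite{sherman1950adjustment} inverts it in closed form as $\Na^{-2}\bigl(I-\tfrac{c}{\Na^{2}+c\Na}J\bigr)$. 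I would carry this out directly rather than going through the pseudo-inverse form of $B$, since this is exactly the simplification the corollary announces. Because $\mathbf{w}\perp\mathbf{1}_{\Na}$, the $J$-term annihilates $\mathbf{w}$. The result $B\mathbf{w}$ is then a scalar multiple of $\left(-\Na+1,\mathbf{1}_{\Na-1}^{T}\right)^{T}$, and the $\mathbf{1}_{\Na}$-component of $\mathbf{v}_{\max}'$ is left unchanged. Substituting into \eqref{eq:centrality_regular_graph_approx} with $\delta$ as in the proof of Theorem~\ref{thm:centrality_regular_graph_approx} gives the form of \eqref{eq:centrality_complete_graph_approx}.

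\textbf{Main obstacle: the scalar prefactor.} Establishing the direction is immediate. The delicate step is pinning down the scalar multiplying $\left(1-\hat{\mathbf{p}_1}^{T}.\hat{\mathbf{p}_2}\right)$ so that it comes out as the stated $1/(\Na-2)$. This scalar depends on the normalisation conventions in Lemma~\ref{lemma:eigen_vecotr_value_rate}: whether $\mathbf{v}$ is taken as $\mathbf{1}_{\Na}$ or $\mathbf{1}_{\Na}/\sqrt{\Na}$, and how $F_1'(\delta)$ is scaled by the edge perturbation. It also depends on exactly which matrix Theorem~\ref{thm:centrality_regular_graph_approx} packages into $B$.

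My first attempt would be to recompute $F_1(0)F_1'(\delta)\mathbf{v}$ directly from the perturbed adjacency, in which only row and column $1$ change. I would then track every constant through the Sherman--Morrison inverse.

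\textbf{Caveat.} I am not confident this yields the stated prefactor. With $\mathbf{v}=\mathbf{1}_{\Na}$, a direct computation gives $F_1(0)F_1'\mathbf{v}=\mathbf{w}$ exactly, that is, the factor $(\Na-2)$ appears in the numerator. Moreover, $\bigl(F_1(0)F_1(0)+2\mathbf{v}\mathbf{v}^{T}\bigr)^{-1}$ acts on $\mathbf{1}_{\Na}^{\perp}$ as $\Na^{-2}I$. Following the conventions of Lemma~\ref{lemma:eigen_vecotr_value_rate} and Theorem~\ref{thm:centrality_regular_graph_approx} literally, this gives the prefactor $(\Na-2)/\Na^{2}$. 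Obtaining $1/(\Na-2)$ would require an additional rescaling, for example of the unperturbed eigenvector, or a different reading of $B$.

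So the plan is to run the calculation carefully with the paper's conventions and check whether such a rescaling is implicit. If it is not, the stated constant cannot be confirmed by this route. In either case, the qualitative conclusion holds: node~$1$ loses centrality and the other nodes gain it uniformly, with sign controlled by $1-\hat{\mathbf{p}_1}^{T}.\hat{\mathbf{p}_2}$.
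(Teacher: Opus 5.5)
Your route is the paper's route. The paper's entire justification is to specialize \eqref{eq:centrality_regular_graph_approx} to the complete graph and collapse $B$ by Sherman--Morrison, which is your Steps~1--2. Every ingredient you compute is correct: $m_{1j}=\Na-2$, the forcing vector $\mathbf{w}=(\Na-2)(-\Na+1,\mathbf{1}_{\Na-1}^{T})^{T}\perp\mathbf{1}_{\Na}$, and $(\Aa-dI)^{2}=\Na^{2}I-\Na\mathbf{1}_{\Na}\mathbf{1}_{\Na}^{T}$, hence $B\mathbf{w}=\Na^{-2}\mathbf{w}$.

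What you should change is the hedge at the end. The mismatch is not a convention issue; the printed constant is wrong. Rescaling the eigenvector by $1+c\delta$ only adds a multiple of $\mathbf{1}_{\Na}$ to the first-order term, whereas the stated correction is orthogonal to $\mathbf{1}_{\Na}$. Once the zeroth-order term is fixed to be $\mathbf{1}_{\Na}$, the coefficient of $(-\Na+1,\mathbf{1}_{\Na-1}^{T})^{T}$ is uniquely determined, and $(\Na-2)/\Na^{2}\neq 1/(\Na-2)$ for every $\Na\ge 3$. The correct statement, which your computation proves, is
\[
\mathbf{v}_{\max}'\approx\mathbf{1}_{\Na}+\frac{(\Na-2)(1-w)}{\Na^{2}}\left(-\Na+1,\ \mathbf{1}_{\Na-1}^{T}\right)^{T},\qquad w=\hat{\mathbf{p}}_{1}^{T}\hat{\mathbf{p}}_{2}.
\]

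You can confirm this independently of Lemma~\ref{lemma:eigen_vecotr_value_rate}. For $w>0$ the Perron root is simple, so by symmetry among nodes $2,\dots,\Na$ the dominant eigenvector of $\Aa'$ has the form $(x,1,\dots,1)^{T}$. The two distinct rows of the eigen-equation give $(\Na-1)w=\lambda x$ and $wx+\Na-2=\lambda$, so $wx^{2}+(\Na-2)x-(\Na-1)w=0$. Implicit differentiation at $(w,x)=(1,1)$ gives $x'(1)=(\Na-2)/\Na$. Removing the $\mathbf{1}_{\Na}$-component of $\frac{\Na-2}{\Na}\delta\,\mathbf{e}_{1}$, with $\mathbf{e}_{1}$ the first standard basis vector, yields exactly your prefactor.

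The paper is also internally inconsistent here. The ring with $\Na=3$ is $K_{3}$, and Corollary~\ref{corollary:eig_vec_approx_circle_graph} gives coefficient $1/9=(3-2)/3^{2}$ there, not $1/(3-2)=1$. For $\Na=2$ the dominant eigenvector is exactly $\mathbf{1}_{2}$ for every $w>0$, which matches $(\Na-2)/\Na^{2}=0$, while the stated formula divides by zero. The printed $1/(\Na-2)$ is presumably what one gets by taking the nonzero eigenvalue of $(\Aa-dI)^{2}$ to be $(\Na-2)^{2}$ instead of $\Na^{2}$; the nonzero eigenvalue of $\Aa-dI$ is $-1-(\Na-1)=-\Na$.

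So nothing is missing from your argument except the willingness to commit. The corollary as stated cannot be proved because it is false. Your computation proves its corrected form, together with the direction and the qualitative conclusion of the printed version.
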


\begin{corollary}
\label{corollary:eig_vec_approx_circle_graph}
% \textbf{In progress - circle graph}
Let $\mathcal{G}$ in Theorem \ref{thm:centrality_regular_graph_approx} be a ring network and $\delta = \left( \hat{\mathbf{p}_1}^{T}.\hat{\mathbf{p}_2} -1\right)$. Here $d = 2$, and the simplified approximate eigenvector centrality is as follows:
\begin{itemize}
\item If $\Na=3$, $\mathbf{v}_{\max}' \approx \mathbf{1}_{\Na} - \frac{1}{9}\delta \left(-2,1,1\right)^{T}$
\item If $\Na=4$, $\mathbf{v}_{\max}' \approx \mathbf{1}_{\Na} - \frac{1}{2}\delta \left(-1,0,1,0\right)^{T}$
\item If $\Na\geq 5$, $\mathbf{v}_{\max}' \approx \mathbf{1}_{\Na} + \frac{1}{\Na}\delta \sum_{k=1}^{\Na-1}\cot^{2}\left(\frac{\pi k}{\Na}\right)\mathbf{v}_{k+1}$
where $\mathbf{v}_{k+1} = \left(1, \omega^{k}, \omega^{2k},\cdots,\omega^{\left(\Na-1\right)k}\right) \in \mathbb{C}^{\Na}$ and $\omega = \exp \left(\frac{2 \pi i}{\Na}\right) \in \mathbb{C}$.
\end{itemize}
\end{corollary}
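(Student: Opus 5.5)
The approach is to specialize the formula \eqref{eq:centrality_regular_graph_approx} of Theorem \ref{thm:centrality_regular_graph_approx} to the ring with $d=2$. Write $\delta = \hat{\mathbf{p}_1}^{T}.\hat{\mathbf{p}_2}-1$, so that $1-\hat{\mathbf{p}_1}^{T}.\hat{\mathbf{p}_2} = -\delta$. Set $\mathbf{w} = (-d^2+d, M^T)^T = (-2, m_{12},\dots,m_{1\Na})^T$. The task is then to compute $B\mathbf{w}$ explicitly.

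\textbf{Common structure of all three cases.} In each case $\mathbf{1}_{\Na}^T\mathbf{w}=0$. Indeed, every node $k$ adjacent to node $1$ has exactly one neighbor other than $1$, and that neighbor is some node $j\neq 1$. Hence $\sum_{j\geq 2} m_{1j} = d(d-1) = 2 = -w_1$. It follows that the rank-one term $\frac{1}{2\Na}\mathbf{1}_{\Na}\mathbf{1}_{\Na}^T$ of $B$ annihilates $\mathbf{w}$, and only the pseudo-inverse term contributes. Because $\Aa$ is circulant, it is diagonalized by the Fourier vectors $\mathbf{v}_{k+1}$, $k=0,\dots,\Na-1$, with eigenvalues $2\cos(2\pi k/\Na)$. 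Therefore $(\Aa-2I)^2$ has eigenvalues $(2\cos(2\pi k/\Na)-2)^2 = 16\sin^4(\pi k/\Na)$, which vanish only at $k=0$. This gives
\begin{equation*}
\left((\Aa-2I)^2\right)^{\dagger} = \sum_{k=1}^{\Na-1} \frac{\mathbf{v}_{k+1}\mathbf{v}_{k+1}^{*}}{16\,\Na\,\sin^4(\pi k/\Na)} .
\end{equation*}

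\textbf{Case $\Na\geq 5$.} Node $1$ has neighbors $2$ and $\Na$. Node $3$ shares neighbor $2$ with node $1$, and node $\Na-1$ shares neighbor $\Na$. So $\mathbf{w} = -2\mathbf{e}_1 + \mathbf{e}_3 + \mathbf{e}_{\Na-1}$. The Fourier coefficient is
\begin{equation*}
\mathbf{v}_{k+1}^{*}\mathbf{w} = -2+\omega^{2k}+\omega^{-2k} = -4\sin^2(2\pi k/\Na).
\end{equation*}
We then use $\sin^2(2\theta) = 4\sin^2\theta\cos^2\theta$ with $\theta=\pi k/\Na$. Dividing by $16\sin^4(\pi k/\Na)$ yields $-\cot^2(\pi k/\Na)$. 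Multiplying by $-\delta$ and $1/\Na$ then gives the stated sum $\frac{1}{\Na}\delta\sum_{k}\cot^2(\pi k/\Na)\mathbf{v}_{k+1}$.

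\textbf{Small cases.} For $\Na=3$ the ring is complete. Here $m_{12}=m_{13}=1$, so $\mathbf{w}=(-2,1,1)^T$. The nonzero eigenvalue of $(\Aa-2I)^2$ is $9$, with eigenspace $\mathbf{1}^\perp$, so the pseudo-inverse equals $\frac19(I-\frac13\mathbf{1}\mathbf{1}^T)$. Since $\mathbf{w}\perp\mathbf{1}$, this gives $B\mathbf{w}=\frac19\mathbf{w}$, hence $-\frac19\delta(-2,1,1)^T$.

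For $\Na=4$ we have $m_{12}=m_{14}=0$ and $m_{13}=2$, so $\mathbf{w}=(-2,0,2,0)^T$. A direct check shows $\Aa\mathbf{w}=0$, so $(\Aa-2I)^2\mathbf{w}=4\mathbf{w}$ and the pseudo-inverse returns $\frac14\mathbf{w}$. This yields $-\frac12\delta(-1,0,1,0)^T$.

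\textbf{Main obstacle.} The delicate step is the bookkeeping of the mutual-neighbor vector $M$. For $\Na=3,4$ the two length-two paths from node $1$ either land on neighbors of $1$ or coincide at the same node, which is exactly why these cases must be separated from $\Na\geq 5$. The second delicate point is the trigonometric simplification to $\cot^2$. I would verify it numerically for, e.g., $\Na=5,6$ against a direct computation via Lemma \ref{lemma:eigen_vecotr_value_rate}.
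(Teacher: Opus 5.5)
Your proposal is correct and takes the paper's route. The paper only asserts that this corollary is a specialization of Theorem \ref{thm:centrality_regular_graph_approx}, and you carry that specialization out explicitly: the rank-one part of $B$ annihilates $\mathbf{w}$ because $\mathbf{1}_{\Na}^T\mathbf{w}=0$, and diagonalizing the circulant $(\Aa-2I)^2$ in the Fourier basis produces the $\cot^2$ coefficients. One minor correction: your closing remark that $\Na=3,4$ ``must be separated'' is inaccurate, since $\mathbf{w}=-2\mathbf{e}_1+\mathbf{e}_3+\mathbf{e}_{\Na-1}$ (counted with multiplicity when $\Na=4$) makes the Fourier formula reproduce both small-case vectors, so the case split in the statement is only presentational.
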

Example ring networks showing the relative influence of agents are presented in Fig. \ref{fig:Ring_network}.  For the unperturbed ring, all agents are equally central. However, since the projection constraint of agent 1 is different from the rest of the group, it becomes the least central over the ring graph in both the even and odd cases, with centrality increasing monotonically with distance from this agent. 
\begin{figure}[!htbp]
    \centering
    \includegraphics[width=0.75\linewidth]{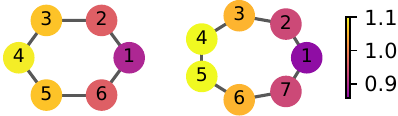}
    \caption{Ring networks with $\Na = 6$ and $\Na = 7$. Here, $\delta = -0.1$ and the node colors correspond to the approximate eigenvector centrality.}
    \label{fig:Ring_network}
\end{figure}

\section{Simulations}
\label{sec:Simulations}
 
The simulations in this section use $S=\tanh$ as the nonlinearity. We consider the custom six-agent network $\mathcal{G}_{1}$ in Fig. \ref{fig:Homogeneous constraints custom graph} in which all agents share homogeneous projection constraints and agents 2 and 5 are the most central, with eigenvector centrality approximately given by $(0.28,0.50,0.41,0.28,0.50,0.41)^{T}$. 
A non-zero bias $\mathbf{b}_{2} = (1,1,-1)^{T}$ is applied only at agent 2.  
On the homogeneous graph, agent 2 has a positive effective bias, and skews the decision of the whole group towards the positive decision state as shown via the unfolding diagram in Fig. \ref{fig:Positive_unfolding_custom_graph_siz_agents}(b). In  Fig. \ref{fig:Positive_unfolding_custom_graph_siz_agents}, agent 2 has a different projection constraint from the rest of the group, and its effective bias from the same input vector becomes negative, skewing the group decision towards the negative state. Simulation trajectories corresponding to these scenarios are shown in Fig. \ref{fig:Custom_graph_trajectories}. This illustrates how selective sensitivity to biases from local constraints can strongly change the emergent decision in the group.

In the first simulation example, change in the collective decision was induced directly by the change in a constraint in the individual receiving task-relevant information. However, the agent decisions can also  change even when the input is not applied to the agent whose projection constraint varies from the rest. This is due to changes in centrality in the induced weighted social graph.
In Fig. \ref{fig:Custom_graph_trajectories_2},  inputs are applied to agents 1 and 4, while the projection constraint of agent 2 is varied. When the constraint on agent 2 is introduced,
the eigenvector centrality of $\mathcal{G}_{3}$  is $(0.25,0.41,0.44,0.36,0.54,0.39)^{T}$. Observe that the centrality index of agent 1 decreases from the homogeneous case, while that of agent 4 increases. The two were equally central under homogeneous constraints. In both the homogeneous and heterogeneous simulation, agent 1 has effective bias $b_{e1} = 0.52$ while agent 4 has effective bias $b_{e4} = -0.42$. However due to the shift in centrality, despite having a weaker effective bias locally, agent 4 has more influence in the group decision once heterogeneity is introduced via agent 2, and most agents follow the negative decision.

\begin{figure}[!htbp]
    \centering
    
    \begin{subfigure}{0.38\linewidth}
        \centering
        \includegraphics[width=\linewidth]{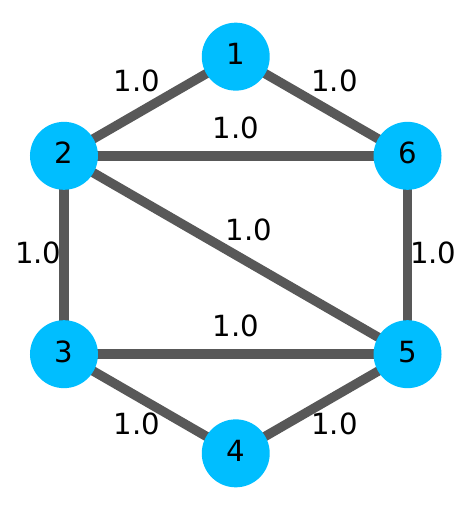}
        \caption{$\mathcal{G}_{1}$}
        \label{fig:Homogeneous constraints custom graph}
    \end{subfigure}
    \hfill
    \begin{subfigure}{0.58\linewidth}
        \centering
        \includegraphics[width=\linewidth]{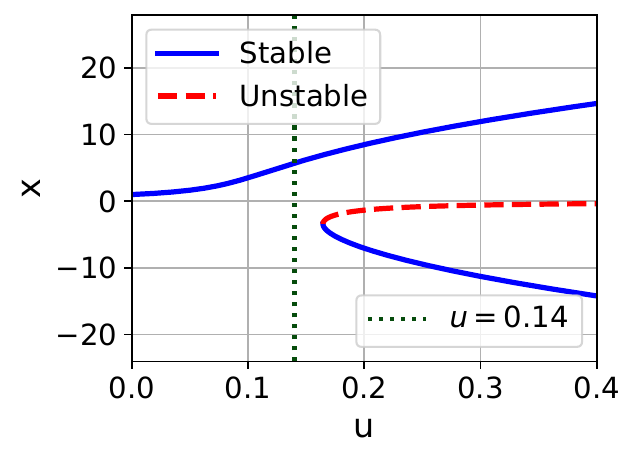}
        \caption{Positive unfolding}
        \label{fig:Positive_unfolding_custom_graph_siz_agents}
    \end{subfigure}
    \\
    \begin{subfigure}{0.38\linewidth}
        \centering
        \includegraphics[width=\linewidth]{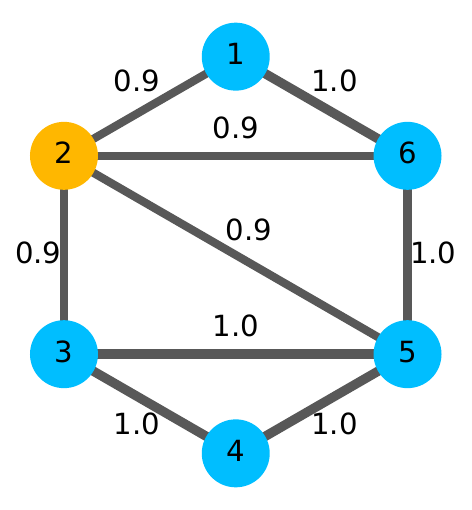}
        \caption{$\mathcal{G}_{2}$}
        \label{fig:Heterogenoeous constraitns custom graph}
    \end{subfigure}
    \hfill
    \begin{subfigure}{0.58\linewidth}
        \centering
        \includegraphics[width=\linewidth]{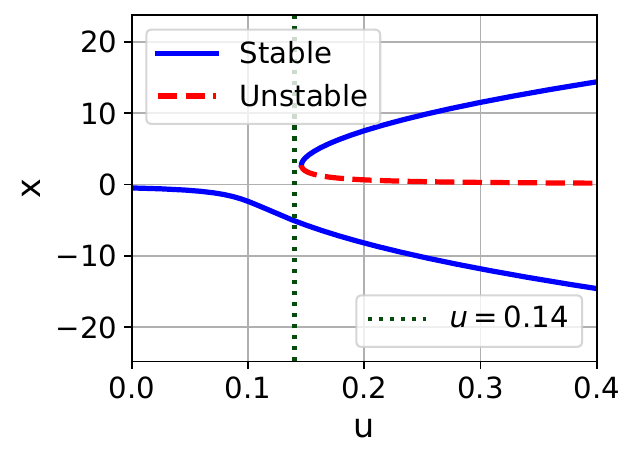}
        \caption{Negative unfolding}
        \label{fig:Negative_unfolding_custom_graph_siz_agents}
    \end{subfigure}
    
    \caption{Custom networks with homogeneous $\left(\mathcal{G}_{1}\right)$ and heterogeneous $\left(\mathcal{G}_{2}\right)$ projection constraints, along with their corresponding positive and negative pitchfork unfolding. $\mathcal{G}_{2}$ is the effective communication network of $\mathcal{G}_{1}$ in the presence of heterogeneous projection constraints. The bias vector of agent 2 is $\mathbf{b}_{2} = (1,1,-1)^{T}$. The interaction weights are labeled near the edges. Nodes with heterogeneous projection constraints are represented using different colors. For $\mathcal{G}_{1}$, the constraint vector for all the agents is $\mathbf{p} = \left(1,1,1\right)^{T}$. For these constraint vectors, the effective bias of agent 2 is $b_{e2} = 0.6$. For $\mathcal{G}_{2}$,  the constraint vectors are $\mathbf{p}_2 = \left(1,1,3\right)^{T}$, $\mathbf{p}_1 = \mathbf{p}_3 =\mathbf{p}_4 =\mathbf{p}_5 =\mathbf{p}_6  = \left(1,1,1\right)^{T}$. For these constraint vectors, the effective bias of agent 2 is $b_{e2} = -0.3$.}
    \label{fig:Custom_network_six_agents_simulation}
\end{figure}

\begin{figure}[!htbp]
    \centering
    \includegraphics[width=\linewidth]{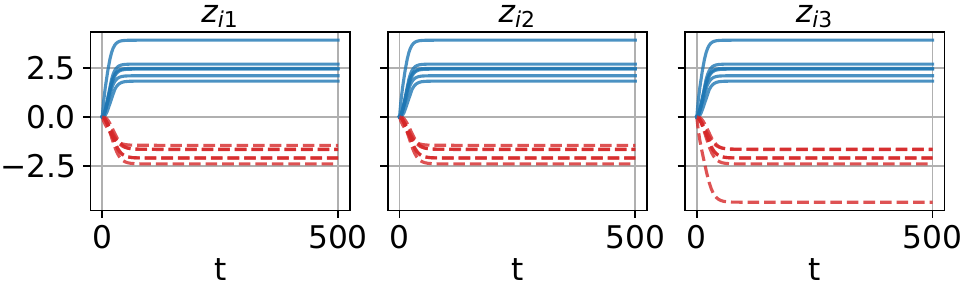}
    \caption{Opinion trajectories for networks $\mathcal{G}_{1}$ and $\mathcal{G}_{2}$, represented by solid blue and dashed red lines, respectively. The parameters used are $u=0.14$, $\alpha = 1$, $d=0.3$ and $\gamma = 0.5$.}
    \label{fig:Custom_graph_trajectories}
\end{figure}

\begin{figure}[!htbp]
    \centering
    \includegraphics[width=\linewidth]{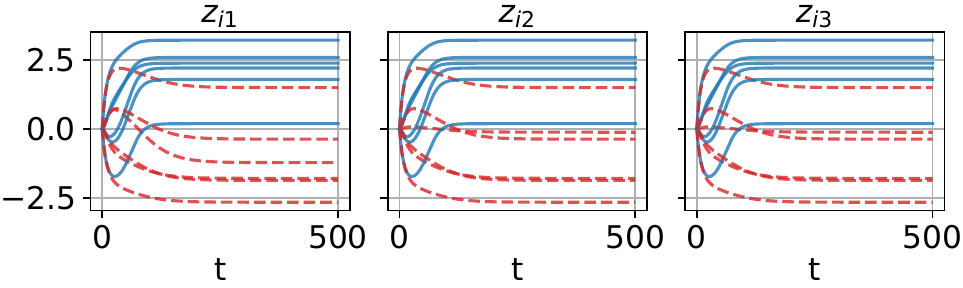}
    \caption{Opinion trajectories for networks with homogeneous $(\mathcal{G}_{1})$ and heterogeneous $(\mathcal{G}_{3})$ projection constraints, represented by solid blue and dashed red lines, respectively.  $\mathcal{G}_{3}$ is the effective communication network of $\mathcal{G}_{1}$ in the presence of heterogeneous projection constraints. The parameters used are $u=0.14$, $\alpha = 1$, $d=0.3$ and $\gamma = 0.5$. Here, non-zero biases $\mathbf{b}_{1} = (0.3,0.3,0.3)^{T}$ and $\mathbf{b}_{4} = (-0.24,-0.24,-0.24)^{T}$. For $\mathcal{G}_{1}$, the constraint vector for all the agents is $\mathbf{p} = \left(1,1,1\right)^{T}$. For $\mathcal{G}_{3}$,  the constraint vectors are $\mathbf{p}_2 = \left(1,0.1,0.1\right)^{T}$, $\mathbf{p}_1 = \mathbf{p}_3 =\mathbf{p}_4 =\mathbf{p}_5 =\mathbf{p}_6  = \left(1,1,1\right)^{T}$.}
    \label{fig:Custom_graph_trajectories_2}
\end{figure}

\section{Conclusion}
\label{sec:Conclusion}
We studied the effects of heterogeneous hard constraints on collective decision-making in the NOD framework. We proved that projection constraints on individual agent opinions induce a global invariant subspace, and for rank-one constraints derived a reduced model governing the effective opinion evolution on this subspace. We showed that the reduced model undergoes a pitchfork bifurcation analogous to the unconstrained case. A key finding is that heterogeneous pairwise alignments between agents' constraint vectors generate an effective weighted social graph from an otherwise unweighted communication network, redistributing eigenvector centrality and thereby reshaping each agent's influence over the collective decision.We characterized this redistribution for several representative graphs, and illustrated in simulation how constraint-induced centrality shifts can reduce group decisions even when the graph topology and inputs are held fixed. In future work, we will extend this analysis to higher-rank constraint subspaces and consider time-varying constraints, where agents' constraint vectors evolve in response to its environment, and to develop feedback principles to deliberately shape agent constraints to achieve desired influence distributions and collective decision outcomes.

\bibliographystyle{./IEEEtran}
\bibliography{./references}

% Generated by IEEEtran.bst, version: 1.12 (2007/01/11)
\begin{thebibliography}{10}
\providecommand{\url}[1]{#1}
\csname url@samestyle\endcsname
\providecommand{\newblock}{\relax}
\providecommand{\bibinfo}[2]{#2}
\providecommand{\BIBentrySTDinterwordspacing}{\spaceskip=0pt\relax}
\providecommand{\BIBentryALTinterwordstretchfactor}{4}
\providecommand{\BIBentryALTinterwordspacing}{\spaceskip=\fontdimen2\font plus
\BIBentryALTinterwordstretchfactor\fontdimen3\font minus \fontdimen4\font\relax}
\providecommand{\BIBforeignlanguage}[2]{{%
\expandafter\ifx\csname l@#1\endcsname\relax
\typeout{** WARNING: IEEEtran.bst: No hyphenation pattern has been}%
\typeout{** loaded for the language `#1'. Using the pattern for}%
\typeout{** the default language instead.}%
\else
\language=\csname l@#1\endcsname
\fi
#2}}
\providecommand{\BIBdecl}{\relax}
\BIBdecl

\bibitem{ren2007distributed}
W.~Ren and E.~Atkins, ``Distributed multi-vehicle coordinated control via local information exchange,'' \emph{International Journal of Robust and Nonlinear Control: IFAC-Affiliated Journal}, vol.~17, no. 10-11, pp. 1002--1033, 2007.

\bibitem{rios2016automated}
J.~Rios-Torres and A.~A. Malikopoulos, ``Automated and cooperative vehicle merging at highway on-ramps,'' \emph{IEEE Transactions on Intelligent Transportation Systems}, vol.~18, no.~4, pp. 780--789, 2016.

\bibitem{gerkey2004formal}
B.~P. Gerkey and M.~J. Matari{\'c}, ``A formal analysis and taxonomy of task allocation in multi-robot systems,'' \emph{The International journal of robotics research}, vol.~23, no.~9, pp. 939--954, 2004.

\bibitem{khamis2015multi}
A.~Khamis, A.~Hussein, and A.~Elmogy, ``Multi-robot task allocation: A review of the state-of-the-art,'' \emph{Cooperative robots and sensor networks 2015}, pp. 31--51, 2015.

\bibitem{degroot1974reaching}
M.~H. DeGroot, ``Reaching a consensus,'' \emph{Journal of the American Statistical association}, vol.~69, no. 345, pp. 118--121, 1974.

\bibitem{seeley1999group}
T.~D. Seeley and S.~C. Buhrman, ``Group decision making in swarms of honey bees,'' \emph{Behavioral Ecology and Sociobiology}, vol.~45, no.~1, pp. 19--31, 1999.

\bibitem{jia2015opinion}
P.~Jia, A.~MirTabatabaei, N.~E. Friedkin, and F.~Bullo, ``Opinion dynamics and the evolution of social power in influence networks,'' \emph{SIAM review}, vol.~57, no.~3, pp. 367--397, 2015.

\bibitem{altafini2012consensus}
C.~Altafini, ``Consensus problems on networks with antagonistic interactions,'' \emph{IEEE transactions on automatic control}, vol.~58, no.~4, pp. 935--946, 2012.

\bibitem{fontan2018achieving}
A.~Fontan and C.~Altafini, ``Achieving a decision in antagonistic multi agent networks: Frustration determines commitment strength,'' in \emph{2018 IEEE Conference on Decision and Control (CDC)}.\hskip 1em plus 0.5em minus 0.4em\relax IEEE, 2018, pp. 109--114.

\bibitem{he2023opinion}
G.~He, Z.~Shen, T.~Huang, W.~Zhang, and X.~Wu, ``Opinion dynamics with heterogeneous multiple interdependent topics on the signed social networks,'' \emph{IEEE Transactions on Systems, Man, and Cybernetics: Systems}, vol.~53, no.~10, pp. 6181--6193, 2023.

\bibitem{bizyaeva2023tac}
A.~Bizyaeva, A.~Franci, and N.~E. Leonard, ``Nonlinear opinion dynamics with tunable sensitivity,'' \emph{IEEE Trans. Autom. Control}, vol.~68, no.~3, pp. 1415--1430, 2022.

\bibitem{bizyaeva2025tac}
------, ``Multi-topic belief formation through bifurcations over signed social networks,'' \emph{IEEE Transactions on Automatic Control}, vol.~70, no.~8, pp. 5082--5097, 2025.

\bibitem{leonard2024fast}
N.~E. Leonard, A.~Bizyaeva, and A.~Franci, ``Fast and flexible multiagent decision-making,'' \emph{Annual Review of Control, Robotics, and Autonomous Systems}, vol.~7, 2024.

\bibitem{cathcart2023proactive}
C.~Cathcart, M.~Santos, S.~Park, and N.~E. Leonard, ``Proactive opinion-driven robot navigation around human movers,'' in \emph{2023 IEEE/RSJ International Conference on Intelligent Robots and Systems (IROS)}.\hskip 1em plus 0.5em minus 0.4em\relax IEEE, 2023, pp. 4052--4058.

\bibitem{paine2024model}
T.~M. Paine and M.~R. Benjamin, ``A model for multi-agent autonomy that uses opinion dynamics and multi-objective behavior optimization,'' in \emph{2024 IEEE International Conference on Robotics and Automation (ICRA)}.\hskip 1em plus 0.5em minus 0.4em\relax IEEE, 2024, pp. 8305--8311.

\bibitem{martinez2025avocado}
D.~Martinez-Baselga, E.~Sebasti{\'a}n, E.~Montijano, L.~Riazuelo, C.~Sag{\"u}{\'e}s, and L.~Montano, ``Avocado: Adaptive optimal collision avoidance driven by opinion,'' \emph{IEEE Transactions on Robotics}, 2025.

\bibitem{qi2025integrating}
S.~Qi, Z.~Tang, Z.~Sun, and S.~Haesaert, ``Integrating opinion dynamics into safety control for decentralized airplane encounter resolution,'' in \emph{2025 IEEE/RSJ International Conference on Intelligent Robots and Systems (IROS)}.\hskip 1em plus 0.5em minus 0.4em\relax IEEE, 2025, pp. 173--178.

\bibitem{mirtabatabaei2012opinion}
A.~Mirtabatabaei and F.~Bullo, ``Opinion dynamics in heterogeneous networks: Convergence conjectures and theorems,'' \emph{SIAM Journal on Control and Optimization}, vol.~50, no.~5, pp. 2763--2785, 2012.

\bibitem{liang2013opinion}
H.~Liang, Y.~Yang, and X.~Wang, ``Opinion dynamics in networks with heterogeneous confidence and influence,'' \emph{Physica A: Statistical Mechanics and its Applications}, vol. 392, no.~9, pp. 2248--2256, 2013.

\bibitem{li2023bounded}
G.~J. Li and M.~A. Porter, ``Bounded-confidence model of opinion dynamics with heterogeneous node-activity levels,'' \emph{Physical Review Research}, vol.~5, no.~2, p. 023179, 2023.

\bibitem{zhang2026modeling}
X.~Zhang, Q.~Wang, F.~Song, F.~Wang, and C.~Qu, ``Modeling and controlling polymorphic opinion dynamics in social networks with heterogeneous sensitivities,'' \emph{IEEE Transactions on Network Science and Engineering}, 2026.

\bibitem{nedic2010constrained}
A.~Nedic, A.~Ozdaglar, and P.~A. Parrilo, ``Constrained consensus and optimization in multi-agent networks,'' \emph{IEEE Transactions on Automatic Control}, vol.~55, no.~4, pp. 922--938, 2010.

\bibitem{wankhede2025multi}
P.~Wankhede, N.~Mandal, S.~Mart{\'\i}nez, and P.~Tallapragada, ``Multi-topic projected opinion dynamics for resource allocation,'' in \emph{2025 IEEE 64th Conference on Decision and Control (CDC)}.\hskip 1em plus 0.5em minus 0.4em\relax IEEE, 2025, pp. 3469--3476.

\bibitem{zhang2021dissensus}
Z.~Zhang, S.~Al-Abri, and F.~Zhang, ``Dissensus algorithms for opinion dynamics on the sphere,'' in \emph{2021 60th IEEE Conference on Decision and Control (CDC)}.\hskip 1em plus 0.5em minus 0.4em\relax IEEE, 2021, pp. 5988--5993.

\bibitem{zhang2022opinion}
------, ``Opinion dynamics on the sphere for stable consensus and stable bipartite dissensus,'' \emph{IFAC-PapersOnLine}, vol.~55, no.~13, pp. 288--293, 2022.

\bibitem{zhang2025mixed}
Z.~Zhang, Y.~Li, S.~Al-Abri, and F.~Zhang, ``Mixed opinion dynamics on the unit sphere for multi-agent systems in social networks,'' in \emph{2025 American Control Conference (ACC)}.\hskip 1em plus 0.5em minus 0.4em\relax IEEE, 2025, pp. 4824--4829.

\bibitem{bullo2018lectures}
F.~Bullo, \emph{Lectures on Network Systems}.\hskip 1em plus 0.5em minus 0.4em\relax Kindle Direct Publishing, 2024.

\bibitem{schaeffer1985singularities}
D.~G. Schaeffer and I.~Stewart, \emph{Singularities and groups in bifurcation theory}.\hskip 1em plus 0.5em minus 0.4em\relax Springer, 1985.

\bibitem{fox1968rates}
R.~Fox and M.~Kapoor, ``Rates of change of eigenvalues and eigenvectors.'' \emph{AIAA journal}, vol.~6, no.~12, pp. 2426--2429, 1968.

\bibitem{bizyaeva2021control}
A.~Bizyaeva, T.~Sorochkin, A.~Franci, and N.~E. Leonard, ``Control of agreement and disagreement cascades with distributed inputs,'' in \emph{2021 60th IEEE Conference on Decision and Control (CDC)}.\hskip 1em plus 0.5em minus 0.4em\relax IEEE, 2021, pp. 4994--4999.

\bibitem{sherman1950adjustment}
J.~Sherman and W.~J. Morrison, ``Adjustment of an inverse matrix corresponding to a change in one element of a given matrix,'' \emph{The Annals of Mathematical Statistics}, vol.~21, no.~1, pp. 124--127, 1950.

\end{thebibliography}

\end{document}